\newtheorem{theorem}{Theorem}
\newtheorem{lemma}{Lemma}     
\newtheorem{corollary}{Corollary}
\newtheorem{proposition}{Proposition}
\theoremstyle{remark}
\newtheorem*{remark}{Remark}
\newtheorem{definition}{Definition}
\newtheorem*{example}{Example}
\newcommand{\Z}{{\mathbf Z}}
\newcommand{\T}{{\mathbf T}}
\newcommand{\N}{{\mathbf N}}
\newcommand{\R}{{\mathbf R}}
\newcommand{\C}{{\mathbf C}}
\newcommand{\m}{{m^*}}
\newcommand{\supp}{{\rm supp}}
\newcommand{\cE}{{\mathcal E}}
\newcommand{\A}{{\mathcal A}}
\newcommand{\F}{{\mathcal F}}
\newcommand{\beq}{\begin{equation}}
\newcommand{\eeq}{\end{equation}}
\newcommand{\beqs}{\begin{equation*}}
\newcommand{\eeqs}{\end{equation*}}
\newcommand{\beg}{\begin{gather}}
\newcommand{\eeg}{\end{gather}}
\begin{document}
\title[Wavelet characterization of growth spaces of harmonic functions]% 
 {Wavelet characterization of growth spaces of harmonic functions}%
\author{Kjersti Solberg Eikrem}
\address{Department of Mathematical Sciences, Norwegian University of Science and Technology, NO-7491, Trondheim, Norway}
\email{kjerstei@math.ntnu.no} 

\author {Eugenia Malinnikova}
 \address{Department of Mathematical Sciences, Norwegian University of Science and Technology, NO-7491, Trondheim, Norway}
\email{eugenia@math.ntnu.no} 

\author{Pavel A. Mozolyako}
\address{Department of Mathematics and Mechanics and Chebyshev Laboratory, St.Petersburg State University, 198904, St.Petersburg, Russia}
\email{pmzlcroak@gmail.com}

\subjclass[2010]{31B25, 42C40, 60G46.}
\keywords{Growth spaces of harmonic functions, boundary behavior, wavelets, multiresolution approximation,  martingales, law of the iterated logarithm}
\begin{abstract}
We consider the space $h_v^\infty$ of  harmonic functions in $\R^{n+1}_+$ with finite norm  $\|u\|_v=\sup|u(x,t)|/v(t)$, where the weight $v$ satisfies the doubling condition. Boundary values of functions in $h_v^\infty$ are characterized in terms of their smooth multiresolution approximations. The characterization yields the isomorphism of Banach spaces $h_v^\infty\sim l^\infty$. The results are also applied to obtain the law of the iterated logarithm for the oscillation of  functions in $h_v^\infty$ along vertical lines.       
\end{abstract}

\maketitle

%%%%%%%%%%%%%%%%%%%%%%%%%%%%%%%%%%%%%%%%%%%
\section{Introduction}
%%%%%%%%%%%%%%

\subsection{Growth spaces of harmonic functions}

Let $v:\R_+\rightarrow\R_+$ be a continuous decreasing function, $\lim_{t\rightarrow 0+} v(t)=+\infty,$ $v(t)=1$ when $t>1$, that satisfies the doubling condition
\begin{equation}
\label{eq:double}
v(t)\le Dv(2t).
\end{equation}

We consider harmonic functions in $\R^{n+1}_+$ with the following growth restriction
\[
|u(x,t)|\le Kv(t),\quad {\text{where}}\ (x,t)\in\R_+^{n+1}.\]
The space of these functions is denoted by $h_v^\infty(\R^{n+1})$ and the least  $K$ for which the inequality above is satisfied is called the norm of $u$ in $h_v^\infty$, we denote it by $\|u\|_{v,\infty}$.
We note that such a harmonic function is bounded in any half-space 
\begin{equation}\label{eq:half}
\R^{n+1}_{\delta}=\{(x,t)\in\R^{n+1}, t\ge \delta>0\}
\end{equation} 
and thus can be represented there by the Poisson integral of its values on the hyperplane $\{t=\delta\}$. We denote by $h_v^0$ the subspace of $h_v^\infty$ consisting of functions $u$ such that $u(x,t)=o(v(t))$ $(t\rightarrow 0)$ uniformly in $x\in\R^n$.
  
Similar growth spaces of analytic and harmonic functions in the unit disk were considered by A.~L.~Shields and D.~L.~Williams in \cite{SW1} and \cite{SW}. The Fourier series of such functions were studied in \cite{BST}, where in particular it is proved that the growth of the function cannot be characterized by the growth of the partial sums of its Fourier series, but is described by the growth of the Ces\`{a}ro means of the Fourier series. In the last decades a thorough study of the isomorphism classes of such weighted spaces was done by W. Lusky, see \cite{L} and the references therein.  For some explicit weights the growth spaces of analytic and harmonic functions in the unit disk and unit ball in $\C^n$  have been intensively studied. We mention here the classical article by B.~Korenblum \cite{K} that was a starting point for interesting research in the area, and a recent article by K.~Seip \cite{Se}, where weighted Hilbert spaces of analytic functions with slow growing weights were considered.  

%%%%%%%%%%%%%%%%%%%%%%%%%%%%%%%%%%%%%%%

\subsection{Formulation of the main results}
In the present work we give a description of (the boundary distribution of) functions in the growth spaces in terms of their multiresolution approximation.  Wavelet series of the boundary values of a harmonic function is a convenient tool that replaces the Fourier series. For the simplest case of Haar wavelets, we get the standard martingale representation. For our purposes we choose smooth multiresolution analysis, the smoothness depends on the weight $v$. This allows us to describe the space $h_v^\infty$ and understand its (Banach space)  geometry and also to obtain smooth approximations of individual functions in  $h_v^\infty$. Initially we were interested in the boundary behavior of harmonic functions and were looking for a smooth version of a  martingale decomposition, see Section \ref{s:osc} for details, however we believe that the wavelet characterization is interesting in itself. 

When the weight grows faster than $t^{-a}$ for some $a$, our description is in terms of the wavelet coefficients; for slow growing weights we consider partial sums of the wavelet series. Description of various functional spaces in terms of their wavelet (or atomic) decomposition is a classical topic in analysis, atomic characterization of many spaces are known, many of these results are translated onto the language of multiresolution approximations and wavelets, we refer the reader to \cite[Ch. 6]{Mey}. In the present work we describe spaces of distributions with some "smoothness"  that depends on the weight $v$.   

Basic notions and results of the wavelet theory are collected in the next section, here we introduce some notation to formulate the main results.
Let $v$ be a weight as above that satisfies the doubling condition. We consider an $r$-regular multiresolution approximation $\{V_j\}_j$ of $L^2(\R^n)$, \cite[ch 2.2]{Mey}, where $r\ge r_0(v)$ will be specified later. Then there exists  $\phi\in V_0$ that satisfies
\[
|\partial^\alpha\phi(x)|\le C_N(1+|x|)^{-N},\]
for any $\alpha$ such that $|\alpha|\le r$ and every $N\in\N$, and $\{\phi(x-k), k\in\Z^n\}$ form an orthonormal basis for $V_0$. Further, there exists 
a collection of smooth (of class $C^r$) functions $\{\psi_p\}_{p=1}^q$ that form an orthonormal basis for $V_1\ominus V_0$, decrease rapidly with all its derivatives of order up to $r$ and satisfy the cancellation property. Then 
\[\psi_{p,jk}=2^{nj/2}\psi_p(2^jx-k),\quad j\in \Z, k\in \Z^n,\ p=1,...,q,\]
is an orthonormal wavelet basis in $L^2(\R^n)$, \cite[ch 3.6]{Mey}. We will use the orthonormal basis $\{\phi(x-k)\}_{k\in\Z^n}\cup\{\psi_{p,jk}\}_{1\le p\le q, j\ge 0, k\in\Z^n}$. For any function $f\in L^\infty(\R^n)$ we define
 \[
c_{p,jk}(f)=\int_{\R^n}f(x)\overline{\psi_{p,jk}(x)}dx,\quad j\ge 0, k\in\Z^n,\ p=1,...,q,\]  
and
\[
b_k(f)=\int_{\R^n}f(x)\overline{\phi(x-k)}dx,\quad k\in\Z^n.\]
Finally, the partial sum of the wavelet decomposition of $f$ is denoted by
\[
s_N(f)(x)=\sum_k b_k(f)\phi(x-k)+\sum_{p=1}^q\sum_{j=0}^{N}\sum_k c_{p,jk}\psi_{p,jk}(x).\]
Now we can formulate our main result.

\begin{theorem}\label{th:intr}
Let $u(x,t)$ be a harmonic function on $\R^{n+1}_+$ bounded on each half-space $\{(x,t): t>t_0>0\}$. 
Then $u\in h_v^\infty$ if and only if there exists $K$ such that \[M_N(u)=\sup_{t>0}\|s_N(u(\cdot,t))\|_{L^\infty(\R^n)}\le Kv(2^{-N}).\]
Similarly, $u\in h_v^0$ if and only if $\lim_{N\rightarrow \infty}M_N(u)(v(2^{-N}))^{-1}=0$.
\end{theorem}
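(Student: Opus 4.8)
The plan is to pass between the harmonic function $u$ on $\R^{n+1}_+$ and the family of "horizontal slices" $f_t = u(\cdot,t) \in L^\infty(\R^n)$, and to compare the wavelet partial sum $s_N(f_t)$ with the value $u(x,2^{-N})$ itself. The key heuristic is that the operator $f \mapsto s_N(f)$ is, up to controllable error, the same as convolving with an approximate identity at scale $2^{-N}$, which is in turn comparable to taking the Poisson extension up to height $2^{-N}$. Concretely, $s_N(f)$ is the orthogonal projection of $f$ onto $V_N$, which is given by convolution with the kernel $K_N(x,y) = 2^{nN}\sum_k \phi(2^N x - k)\overline{\phi(2^N y - k)}$; because $\phi$ is $r$-regular this kernel satisfies $|K_N(x,y)| \le C\,2^{nN}(1+2^N|x-y|)^{-N'}$ for a large exponent $N'$, so $s_N$ maps $L^\infty$ to $L^\infty$ with norm bounded independently of $N$.

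The two directions then go as follows. For the "only if" direction, assume $u \in h_v^\infty$. First I would show $\|s_N(u(\cdot,t))\|_\infty \lesssim \|u(\cdot,t)\|_\infty \le K v(t)$ using the uniform $L^\infty \to L^\infty$ bound on $s_N$; for $t \le 2^{-N}$ the doubling condition gives $v(t) \lesssim v(2^{-N})$ after at most a bounded-per-step comparison, but for $t \ge 2^{-N}$ one needs to do better. The right move is to observe that $s_N(u(\cdot,t))$ is essentially independent of $t$ for $t \lesssim 2^{-N}$: since $u$ is harmonic, $u(\cdot,t)$ is the Poisson extension of $u(\cdot,\delta)$ for any $\delta < t$, and $s_N \circ P_t$ (partial sum composed with Poisson semigroup) is a convolution operator whose kernel, for $t \le 2^{-N}$, is still a nice approximate identity at scale $2^{-N}$; hence $\|s_N(u(\cdot,t))\|_\infty \lesssim \sup_{x}|u(x,c\,2^{-N})| \le K v(c\,2^{-N}) \lesssim K v(2^{-N})$ uniformly in $t$, using doubling to absorb the constant $c$. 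This gives $M_N(u) \lesssim K v(2^{-N})$. The $h_v^0$ statement follows from the same estimates together with a standard $\varepsilon$-splitting of $u$ into a part small near the boundary and a part that is nice.

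For the "if" direction, assume $M_N(u) \le K v(2^{-N})$ for all $N$. I want to recover the pointwise bound $|u(x,t)| \le K' v(t)$. Fix $(x,t)$ and let $N$ be the integer with $2^{-N-1} < t \le 2^{-N}$. The idea is to write $u(x,t)$ in terms of $u(\cdot, 2^{-N})$ via the Poisson kernel, then replace $u(\cdot,2^{-N})$ by its partial sum $s_N(u(\cdot,2^{-N}))$ up to a controllable error. The clean way is to telescope: $s_{N+1}(f) - s_N(f) = \sum_{p,k} c_{p,Nk}(f)\psi_{p,Nk}$ is the detail at scale $2^{-N}$, and one estimates these detail terms and their Poisson extensions. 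Because $\psi_p$ has vanishing moments up to order $r$ and $u$ is harmonic (so $u(\cdot,t)$ is very smooth with derivative bounds coming from $v$ via the maximum principle and interior estimates, $|\nabla u(x,t)| \lesssim v(t/2)/t$), the Poisson extension to height $t \sim 2^{-N}$ of the scale-$j$ detail decays geometrically in $|j - N|$ with a rate governed by $r$ versus the doubling exponent of $v$. Summing the geometric series over $j$ and using doubling to compare $v(2^{-j})$ with $v(2^{-N})$ gives $|u(x,t)| \lesssim K v(2^{-N}) \lesssim K v(t)$. Running the same estimate with the tail sums made small yields the $h_v^0$ claim.

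The main obstacle is the matching between the smoothness parameter $r$ of the multiresolution approximation and the doubling (hence polynomial-growth) exponent of $v$: the geometric decay of Poisson-extended wavelet details at scale $2^{-j}$ is of order $2^{-r|j-N|}$ (from the $r$ vanishing moments and $r$-regularity), while the weight can grow like $v(2^{-j}) \lesssim D^{|j-N|} v(2^{-N})$, so convergence of the telescoping series requires $2^{r} > D$, i.e. $r \ge r_0(v)$ with $r_0$ chosen in terms of $\log_2 D$. Making this quantitative — producing the explicit decay estimate for $P_t * \psi_{p,jk}$ in $L^\infty$ with the correct dependence on $|j-N|$, uniformly in $k$ and $x$ — is the technical heart of the argument; everything else is bookkeeping with the doubling condition and the standard kernel bounds for $r$-regular multiresolution approximations.
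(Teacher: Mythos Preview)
Your heuristic that $s_N$ behaves like Poisson averaging at height $2^{-N}$ is correct, but two steps you assert are precisely where the work lies, and your sketches do not deliver them. In the ``only if'' direction you have the cases reversed (for $t\ge 2^{-N}$ the trivial bound $\|s_N(u(\cdot,t))\|_\infty\le C\|u(\cdot,t)\|_\infty\le CKv(t)\le CKv(2^{-N})$ already works); the hard case is $t<2^{-N}$, where you claim $\|s_N(u(\cdot,t))\|_\infty\le C\sup_x|u(x,c\,2^{-N})|$. But writing $u(\cdot,t)=P_{t-\delta}\ast u(\cdot,\delta)$ and observing that $s_N\circ P_{t-\delta}$ has an $L^1$-bounded kernel only gives control by $\|u(\cdot,\delta)\|_\infty$ with $\delta<t$---the wrong direction. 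To compare with $u$ at the \emph{larger} height $c\,2^{-N}$ one must factor $E_N(x,\cdot)$ through $P_{2^{-N}}$, i.e.\ divide by the Poisson kernel on the Fourier side; since $\hat E_N(x,\tau)$ has only polynomial decay in $\tau$ while $e^{2\pi|\tau|2^{-N}}$ grows exponentially, this division fails globally. The paper's Main Lemma (Bourgain's trick: build $\Sigma\in L^1$ with $\hat\Sigma(\tau)=e^{2\pi|\tau|}$ on $|\tau|\le 1$) fixes this by cutting $\hat E_N$ into compactly frequency-supported pieces, dividing on each piece, and summing the resulting bounds using $r>\m+n$.

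In the ``if'' direction your dyadic telescope fails for slowly growing weights. For $j>N$ the scale-$j$ detail convolved with $P_t$, $t\sim 2^{-N}$, does decay like $(2^{j}t)^{-r}$, but for $j<N$ there is no such mechanism: the detail lives at frequencies $|\tau|\sim 2^j$, so $e^{-2\pi|\tau|t}\approx 1$, and the only available bound is $\|s_{j+1}(u(\cdot,t'))-s_j(u(\cdot,t'))\|_\infty\le CKv(2^{-j})\le CKv(2^{-N})$. Summing over $j<N$ gives at best $CNv(2^{-N})$; for $v(t)=\log(e/t)$ this is of order $N^2$, not $v(2^{-N})\sim N$. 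The paper avoids this by grouping generations into blocks $(\alpha_{l-1},\alpha_l]$ chosen so that $v(2^{-\alpha_l})\in[A^l,A^{l+1})$, whence the low-frequency sum $\sum_{l\le L}v(2^{-\alpha_l})$ is a geometric series comparable to $v(s)$. Finally, your appeal to $|\nabla u(x,t)|\le Cv(t/2)/t$ in this direction is circular: that interior estimate presupposes $u\in h_v^\infty$, which is the conclusion you are proving.
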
 

The proof of the theorem above combines standard tools of multiresolution analysis with a clever argument of J.~Bourgain, \cite{Bour}, that allows one to squeeze a convolution with the appropriate Poisson kernel. This trick was also used in \cite{O,Moz}. 

The result yields in particular an isomorphism $h_v^\infty\sim l^\infty$  for weights with the doubling property. For the case of the unit disk the isomorphism was obtained by W.~Lusky, \cite{L0, L00} by different methods. We also refer the reader to \cite{L} for recent results on isomorphic classes of growth spaces of holomorphic and harmonic functions on complex disk and plane, where non-doubling weights are considered.

In the course of the proof of Theorem \ref{th:intr}  we obtain another characterization of functions in $h_v^\infty$ that does not refer to multiresolution analysis. 
Let $g$ be a non-zero radial function in $\R^n$ such that $g\in C^r$, where $r$ is large enough ($r>r_0(v)$). Assume also that $g$ with all its partial derivatives of order up to $r$ satisfies
\[
|\partial^\beta g(x)|\le \frac{C}{(1+|x|^2)^{n+1}}.\]
For example $g$ with compact support will work. Then $(1+|x|^{n+1})\partial^\beta g\in L^1(\R^n)$ when $|\beta|\le r$. We have
\begin{equation}\label{eq:gFour}
|\hat{g}(\tau)|\le \frac{C}{(1+|\tau|)^r},
\end{equation}
and similar estimates hold for partial derivatives of $\hat{g}$ up to order $n+1$.  

%Let $\g(x_1,x_2,...,x_n)=h(x_1)h(x_2)...h(x_n)$, where $h=\chi\ast\chi\ast\cdots\ast\chi$ and $\chi$ is the characteristic function of the interval $[-1/2,1/2]$. The number of factors $r$ depends only on the weight $v$. Then $\g$ has compact support and satisfies
 %\begin{equation}\label{eq:gFour}
%|\hat{\g}(\tau)|\le \frac{C}{(1+|\tau|)^r}.
%\end{equation}
%Moreover, similar estimates hold for partial derivatives of $\hat{\g}$.  

\begin{theorem}\label{th:intr1}
Let $u$ be a harmonic function on $\R^{n+1}_+$ that is bounded in each half-space $\{(x,t): t\ge t_0>0\}$ and let $g\in L^1(\R^n)$ be a radial function such that $\hat{g}$ has derivatives in $L^1(\R^n)$ up to order $n+1$, (\ref{eq:gFour}) holds for $\hat{g}$ and its derivatives, and $\hat{g}(0)\neq 0$. Then $u\in h^\infty_v$ if and only if there exists a constant $C_u$ such that
\begin{equation}\label{eq:WT}
\left|\int_{\R^n} u(x,t) g\left(\frac{y-x}{a}\right)dx\right|\le C_u a^{n}v(a),
\end{equation}
for all $t>0, a>0$ and $y\in\R^n$. 
\end{theorem}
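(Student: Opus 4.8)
The plan is to prove both implications by comparing the quantity in \eqref{eq:WT} with the pointwise growth bound $|u(x,t)|\le Kv(t)$, using the Poisson representation of $u$ in half-spaces together with the decay and cancellation of $g$. For the easy direction, suppose $u\in h_v^\infty$ with $\|u\|_{v,\infty}=K$. Fix $t>0$, $a>0$, $y\in\R^n$. Since $g\in L^1$ and $|u(x,t)|\le Kv(t)$, the integral $\int u(x,t)g((y-x)/a)\,dx$ converges absolutely; a naive estimate gives $a^n v(t)\|g\|_{L^1}$, which is the right bound only when $a\sim t$. To get $a^n v(a)$ for all $a$, I would split into the cases $a\le t$ and $a\ge t$. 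When $a\ge t$, use $v(t)\ge v(a)$ is false (since $v$ decreases), so instead when $a\le t$ we have $v(a)\ge v(t)$ and the naive bound already gives $\le K a^n v(t)\le Ka^nv(a)$. When $a>t$, I would represent $u(x,t)$ in the half-space $\{s\ge t\}$... this is circular; better, I would use that $u(\cdot,t)$ is itself obtained by applying the Poisson semigroup and exploit $\hat g(0)\neq 0$ only in the converse. Actually the correct route for $a>t$ is to integrate by parts / use the cancellation of $g$ against the harmonic function: write $g((y-x)/a)=a^n\check{h}$ for a suitable dilate and use $\int u(x,t)\,\overline{\psi}\,dx$-type bounds, reducing to the case $a\le t$ after a dyadic decomposition $a=2^m t$ and telescoping with the doubling of $v$; each increment contributes a factor controlled by \eqref{eq:gFour} with $r>r_0(v)$, and summing the geometric-type series (whose ratio beats the doubling growth $D$ of $v$ once $r$ is large) yields $\le C_u a^n v(a)$.

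For the converse, assume \eqref{eq:WT} holds. I want to recover $|u(x,t)|\le Kv(t)$. The idea, following the Bourgain trick mentioned after Theorem \ref{th:intr}, is to write the Poisson kernel $P_t$ as an average of dilates of $g$: since $\hat g(0)\neq 0$ and $g$ is radial, one can find a finite or integral combination $P_t(x)=\int_0^\infty g_a(x)\,d\mu(a)$ (in Fourier terms, $\widehat{P_t}(\tau)=e^{-t|\tau|}=\int \hat g(a\tau)\,d\mu_t(a)$), and then
\[
u(y,t+s)=\int_{\R^n}u(x,s)P_t(y-x)\,dx=\int_0^\infty\!\!\left(\int_{\R^n}u(x,s)g\!\left(\frac{y-x}{a}\right)\frac{dx}{a^n}\right)a^n\,d\mu_t(a),
\]
so by \eqref{eq:WT} we get $|u(y,t+s)|\le C_u\int_0^\infty v(a)\,|d\mu_t(a)|$, and it remains to show this last integral is $\le C v(t)$. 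The construction of the measure $\mu_t$ and the estimate $\int v(a)|d\mu_t(a)|\lesssim v(t)$ is the heart of the matter: one normalizes so that $\mu_1$ is a fixed finite (signed) measure with fast decay at $a=0$ and $a=\infty$ coming from \eqref{eq:gFour} and the smoothness of $\hat g$ up to order $n+1$ (the derivative hypothesis guarantees enough decay of $d\mu_1$ at infinity to integrate against $v$, while $\hat g(0)\neq 0$ handles the behavior near $a=0$); then $\mu_t$ is the pushforward of $\mu_1$ under $a\mapsto ta$, and the doubling condition \eqref{eq:double} converts $\int v(ta)\,|d\mu_1|(a)$ into $v(t)\int (1+a)^{\log_2 D}|d\mu_1|(a)\le Cv(t)$, which is finite precisely when $r_0(v)$ is chosen large relative to $\log_2 D$.

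The main obstacle I anticipate is the construction of the representing measure $\mu_t$ for the Poisson kernel as a superposition of dilates of $g$ — i.e., inverting the relation $e^{-|\tau|}=\int_0^\infty \hat g(a\tau)\,d\nu(a)$ with a measure $\nu$ whose total variation against the weight $v$ is controlled. This is a Mellin-transform / Tauberian type inversion: on the multiplicative group $\R_+$, one needs $\hat g$ to be "non-degenerate" so that division by its Mellin transform is bounded, and then quantitative decay of $d\nu$ follows from the $n+1$ derivatives of $\hat g$ in $L^1$ and the bound \eqref{eq:gFour}. Making this rigorous — choosing the threshold $r_0(v)\sim \log_2 D$ and verifying absolute convergence at both ends — is where the real work lies; the rest is routine Fubini and doubling. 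One should also take care that the outer integral defining $u(y,t+s)$ converges absolutely (justified by the $L^1$-in-$a$ control just described combined with the local boundedness of $u$), and then let $s\to 0$ is unnecessary since the bound $|u(y,t)|\le C_u' v(t)$ already follows by renaming $t+s$.
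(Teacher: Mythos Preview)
Your converse direction has a genuine gap: the representation $P_t=\int_0^\infty g_a\,d\mu_t(a)$ need not exist under the stated hypotheses on $g$. On the Fourier side this asks for $e^{-2\pi|\tau|}=\int_0^\infty \hat g(a|\tau|)\,d\nu(a)$, which in the multiplicative (log-) variable is an additive convolution equation; solving it requires dividing by the Fourier transform of $u\mapsto\hat g(e^u)$, and nothing in the hypotheses prevents that transform from vanishing. For a concrete obstruction, take $\hat g(\tau)=e^{-|\tau|^2}+e^{-\lambda^2|\tau|^2}$ with any $\lambda>1$: this $g$ is radial, Schwartz, $\hat g(0)=2$, and all decay/derivative conditions hold, yet the Mellin transform of the radial profile is $\tfrac12\Gamma(z/2)(1+\lambda^{-z})$, which vanishes at $z=i\pi/\log\lambda$. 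Since the corresponding Mellin transform of $e^{-s}$ is $\Gamma(z)$ (nonvanishing), no finite signed measure $\nu$ can solve the equation. So the ``Mellin/Tauberian inversion'' you flag as the main obstacle is not merely technical---it can actually fail.

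Your forward direction is also not in workable shape. For $a>t$ the naive bound overshoots, and ``cancellation of $g$'' is unavailable because $\int g=\hat g(0)\neq 0$; the sketch ``$a=2^m t$ and telescope'' does not produce a controllable increment without an additional idea. What you are missing in both directions is the actual content of the Bourgain trick used in the paper: one constructs $\Sigma\in L^1(\R^n)$ with $\hat\Sigma(\tau)=e^{2\pi|\tau|}$ on $|\tau|\le 1$, which lets you ``undo'' one Poisson step on bounded frequencies and yields the Main Lemma
\[
\left|\int_{\R^n} u(x,t)\sigma(x)\,dx\right|\le C_n\,\|u\|_{L^\infty(\R^{n+1}_\delta)}\,\|\sigma\|_1\qquad\text{whenever }\supp\hat\sigma\subset B_{1/\delta}.
\]
From this the paper derives an intermediate characterization (Proposition~\ref{cor:0}): $u\in h_v^\infty$ iff the above integral is $\le Cv(\delta)\|\sigma\|_1$ for all such $\sigma$. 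The equivalence with \eqref{eq:WT} is then obtained by two Littlewood--Paley type decompositions on the Fourier side: to pass from \eqref{eq:n1} to $u\in h_v^\infty$, split $P$ as $\sum_j\sigma_j$ with $\supp\hat\sigma_j\subset B_{2^j}$ and $\|\sigma_j\|_1$ exponentially small; to pass from \eqref{eq:WT} to \eqref{eq:n1}, use only that $\hat g(0)\neq 0$ to build $\Sigma_g\in L^1$ with $\widehat{\Sigma_g}\hat g=1$ on a small ball $B(\rho)$, so that any $\sigma$ with $\supp\hat\sigma\subset B_{1/\delta}$ factors as $\sigma=(\sigma*(\Sigma_g)_{\rho\delta})*g_{\rho\delta}$ and \eqref{eq:WT} (with $a=\rho\delta$) plus doubling give \eqref{eq:n1}. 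The forward direction $u\in h_v^\infty\Rightarrow$\eqref{eq:WT} then follows by decomposing the dilate of $g$ dyadically in frequency and summing $\sum_j D^j 2^{-(r-n)j}<\infty$ for $r$ large. Note that this route uses $\hat g(0)\neq 0$ only locally near the origin---exactly what the hypotheses give---and never needs a global Mellin inverse.
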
 

An example of such function $g$ is given by $g(x)=h(|x|)$, where $h=\chi\ast\chi\ast\cdots\ast\chi$ and $\chi$ is the characteristic function of the interval $[-1/2,1/2]$. The number of factors equals $r$. Then $g$ has compact support and satisfies the conditions of the theorem. 

The formulation of the theorem is inspired by the Korenblum's premeasures of bounded $\kappa$-variations, \cite[Section 5]{K}. We remind that a harmonic function $U(z)$ in the unit disk satisfies 
\[
U(z)\le C\log\frac{e}{1-|z|},\ U(0)=0\]
if and only if $U$ is the Poisson integral of a finitely additive function $\mu$ (called a premeasure) defined on finite unions of subintervals of the unit circle such that  $\mu(I_n)\rightarrow 0$ for a sequence $I_1\supset I_2\supset...$ of intervals that satisfies $\cap_n I_n=\emptyset$, $\mu(\T)=0$ and $\mu(I)\le C|I|\log|I|$, where $I$ is an interval on the circle and $|I|$ is its normalized Lebesgue measure. This is equivalent to the following estimate
\[
\int_{I}U(re^{i\theta})d\theta\le C|I|\log\frac{e}{|I|}
\]
for any interval $I$ and any $r\in(0,1)$, which resembles (\ref{eq:WT}).  Let us also make clear that our result is applicable only to harmonic functions with estimates of the absolute value (two-sided estimates), while the Korenblum estimate above holds for a substantially wider class of harmonic functions bounded from above (one-sided estimate).

%%%%%%%%%%%%%%%%%%%%%%%%%%%%%%%%%%%%%%%%%%%%%%%%%%%

\subsection{Boundary behavior of functions in the growth spaces}  
In the second part of the article we use the wavelet description of functions in $h_v^\infty$ to study their boundary behavior. Previous results on the boundary limits of functions with such growth conditions can be found in \cite{BLMT, LM, EM, E}. We consider the weighted averages of $u$ along vertical lines:
\[
I_u(x,s)=\int_s^1 u(x,t)d\left(\frac1{v}(t)\right).\]
Clearly $|I_u(x,s)|\le K\log v(s).$ We want to show that 
\beq\label{eq:I}
\limsup_{s\rightarrow 0+}\frac{I_u(x,s)}{\sqrt{\log v(s)\log\log\log v(s)}}\le C,
\eeq
for almost every $x\in\R^n$. Similar results were obtained in the unit disk for the weight $v(t)=\log1/t$ in \cite{LM} and for Hadamard gap series in general growth spaces in \cite{E}. Our calculations show that the scheme developed in \cite{LM} works for slow-growing weights. In order to include weights that grow faster but satisfy the doubling condition (for example weights that grow at zero faster than powers of $\log 1/t$ but slower than powers of $t^{-1}$), we use multiresolution approximation and orthogonal wavelets with compact supports. Further, as in \cite{LM}, we construct a martingale approximation of $I_u$. Finally the law of the iterated logarithm is applied to obtain (\ref{eq:I}). Martingales and laws of iterated logarithm are by now classical tools to study the boundary behavior of harmonic functions, see \cite{M, CWW} for nice applications of this technique and \cite{BKM,BM} for detailed expositions of the topic, we also refer the reader to \cite{LN} for interesting laws of the iterated logarithm in real analysis.  The main difficulty in our case is to construct a suitable martingale and give the estimates of its square function. We work with dyadic $\sigma$-algebras but take approximations only on some levels, depending on the growth of the weight $v$. Moreover, to obtain the estimates we cut off appropriate high frequencies on each level. In other words we divide our wavelet decomposition into blocks, take averages on the corresponding scales, estimate the resulting martingale and the approximation error.   

The article is organized as follows. In the next section we prove the main lemma (based on the ideas of J.~Bourgain, \cite{Bour}) and Theorem \ref{th:intr1}, we also collect necessary results on multiresolution analysis (following \cite{Mey}). In section \ref{s:main} we  prove Theorem \ref{th:intr}; we work with blocks of wavelets of consecutive generations, where the size of the block depends on the weight function $v$, we also discuss the isomorphism $h_v^\infty\sim l^\infty$. Section \ref{s:osc} is devoted to the estimate (\ref{eq:I}). Finally, in the last section we collect some remarks and open problems.

%%%%%%%%%%%%%%%%%%%%%%%%%%%
\section{Preliminaries}
%%%%%%%%%%%%%%%%%%%%%%%%%%%%%%
\subsection{Auxiliary results and Main lemma} In this subsection we prove some lemmas. The first one is an elementary estimate.
\begin{lemma}\label{l:0}
Let $g\in L^1(\R^n)$ and $|x|^{2M}g(x)\in L^1(\R^n)$ for some integer $M>n/2$, if $\|\hat{g}\|_1=C_1$ and 
$\|\Delta^{M}(\hat{g})\|_1\le C_2$ then 
\[
\|g\|_1\le c(n,M)\left(C_1^{2M-n}C_2^n\right)^{1/(2M)}.\] 
\end{lemma}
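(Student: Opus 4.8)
The natural strategy is to interpolate between the pointwise estimate $\|g\|_\infty \le \|\hat g\|_1 = C_1$ (Hausdorff--Young / Fourier inversion) and a second estimate controlling $g$ at infinity via the hypothesis on $\Delta^M \hat g$. First I would recall that $\widehat{|x|^{2M}g} = (-\Delta)^M \hat g$ up to a constant, so the hypothesis $\|\Delta^M \hat g\|_1 \le C_2$ translates, again by Fourier inversion, into the bound $\||x|^{2M} g\|_\infty \le c(n,M) C_2$. Thus we have two $L^\infty$ bounds on $g$: one uniform, $|g(x)| \le c C_1$, and one with decay, $|g(x)| \le c C_2 |x|^{-2M}$.

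Next I would compute $\|g\|_1$ by splitting the integral over a ball $B_\rho$ of radius $\rho$ and its complement, using the uniform bound on $B_\rho$ and the decay bound outside:
\[
\|g\|_1 \le c C_1 \rho^n + c C_2 \int_{|x| > \rho} |x|^{-2M}\,dx = c C_1 \rho^n + c' C_2 \rho^{n-2M},
\]
where convergence of the second integral uses exactly $2M > n$. Optimizing over $\rho > 0$ — the two terms balance when $\rho^{2M} \sim C_2/C_1$, i.e. $\rho \sim (C_2/C_1)^{1/(2M)}$ — yields $\|g\|_1 \le c(n,M)\, C_1^{1 - n/(2M)} C_2^{n/(2M)} = c(n,M)(C_1^{2M-n} C_2^n)^{1/(2M)}$, which is the claimed inequality.

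The only genuinely delicate point is making the two $L^\infty$ bounds on $g$ rigorous, i.e. justifying that the Fourier inversion formula $g(x) = (2\pi)^{-n}\int \hat g(\tau) e^{ix\cdot\tau}\,d\tau$ holds pointwise (a.e.) and that $|x|^{2M} g(x)$ is, up to a constant, the inverse transform of $\Delta^M \hat g$. This follows because $g \in L^1$ and $|x|^{2M} g \in L^1$ already force $\hat g$ to be $C^{2M}$ with $\Delta^M \hat g \in L^1$ (indeed $\Delta^M \hat g = \widehat{(-|x|^2)^M g}$ as an honest $L^1$ function), and an $L^1$ function with $L^1$ Fourier transform is a.e. equal to the inverse transform of its transform; the differentiation-under-the-integral identity then gives $(-|x|^2)^M g(x) = $ (inverse transform of $\Delta^M \hat g)(x)$ a.e. Everything else is the elementary split-and-optimize computation above.
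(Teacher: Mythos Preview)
Your proposal is correct and follows essentially the same route as the paper: obtain the two pointwise bounds $|g(x)|\le C_1$ and $|x|^{2M}|g(x)|\le C_2$ from Fourier inversion applied to $g$ and to $|x|^{2M}g$, split the $L^1$-norm at radius $R$, and optimize at $R^{2M}=C_2/C_1$. Your extra care in justifying the inversion formula is more detailed than the paper's, but the argument is the same.
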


\begin{proof}
Since $g,\hat{g}\in L^1(\R^n)$ the inversion formula implies that $|g(x)|\le C_1$. Similarly, $|x|^{2M}g(x)  , \widehat{|x|^{2M}g(x)}\in L^1$ and $|g(x)|\le C_2|x|^{-2M}$. Thus
\[
\|g\|_1\le c_n\left(C_1\frac{R^n}{n}+C_2\frac{R^{n-2M}}{2M-n}\right),
\]
for any $R>0$. Choosing $R$ such that $R^{2M}=C_2C_1^{-1}$ we obtain the required estimate.
\end{proof}

Let $P(x)=c_n(1+|x|^2)^{-(n+1)/2}$ be the standard Poisson kernel, $P_s(x)=s^{-n}P(\frac{x}{s})$ as usual. The constant $c_n$ is chosen such that  $\hat{P}(\tau)=e^{-2\pi|\tau|}$.
The next lemma will be used to divide by the Poisson kernel in the Fourier transforms and is inspired by \cite{Bour, O}. We give a proof for the convenience of the reader.
\begin{lemma}\label{l:1}
There exists $\Sigma\in L^1(\R^n)$ such that $\hat{\Sigma}(\tau)=e^{2\pi|\tau|}$ when $|\tau|\le 1$.
\end{lemma}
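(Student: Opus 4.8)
The goal is to construct $\Sigma\in L^1(\R^n)$ whose Fourier transform agrees with $e^{2\pi|\tau|}$ on the unit ball. The plan is to write $e^{2\pi|\tau|}$ on $|\tau|\le 1$ as a rapidly converging series in powers of $|\tau|$ that, term by term, comes from an $L^1$ function, and then to localize with a smooth cutoff. First I would pick a fixed $\chi\in C_c^\infty(\R^n)$ with $\chi(\tau)=1$ for $|\tau|\le 1$ and $\supp\chi\subset\{|\tau|\le 2\}$, so that it suffices to realize $\chi(\tau)e^{2\pi|\tau|}$ as the Fourier transform of an $L^1$ function; this changes nothing on $|\tau|\le 1$. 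The key expansion is
\[
\chi(\tau)e^{2\pi|\tau|}=\sum_{m=0}^\infty \frac{(2\pi)^m}{m!}\,\chi(\tau)|\tau|^m .
\]
So the problem reduces to controlling, in $L^1(\R^n)$-norm of the inverse Fourier transform, the functions $\chi(\tau)|\tau|^m$, and summing the resulting bounds against $(2\pi)^m/m!$.

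The main technical step is to estimate $\|(\chi(\cdot)|\cdot|^m)^{\vee}\|_{L^1(\R^n)}$ with a bound that grows at most polynomially in $m$ (or at worst like $C^m$ with $C$ independent of $m$), which is comfortably beaten by the factorial. Here I would invoke Lemma \ref{l:0} with $g=(\chi(\cdot)|\cdot|^m)^{\vee}$, so $\hat g(\tau)=\chi(\tau)|\tau|^m$ up to the harmless reflection $\tau\mapsto-\tau$. Since $\chi(\tau)|\tau|^m$ is compactly supported, all the hypotheses of Lemma \ref{l:0} hold, and one needs $\|\chi|\cdot|^m\|_1=C_1$ and $\|\Delta^M(\chi|\cdot|^m)\|_1\le C_2$ for a fixed integer $M>n/2$. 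The first is bounded by $\sup_{|\tau|\le 2}|\tau|^m\cdot\|\chi\|_1\le 2^m\|\chi\|_1$. For the second, $\Delta^M(\chi(\tau)|\tau|^m)$ is a sum, via the Leibniz rule, of $2M$ derivatives distributed between $\chi$ (which contributes bounded factors supported in $|\tau|\le 2$) and $|\tau|^m$; each derivative of $|\tau|^m$ lowers the power by one and produces a factor that is $O(m)$, so $\Delta^M(\chi|\cdot|^m)$ is bounded pointwise on $\supp\chi$ by $C(M,n)\,m^{2M}2^m$, hence $C_2\le C(M,n)\,m^{2M}2^m$ — one should be slightly careful near $\tau=0$ where $|\tau|^m$ is not smooth for small $m$, but for $m\ge 2M$ it is $C^{2M}$, and the finitely many small $m$ are handled directly. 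Lemma \ref{l:0} then yields
\[
\bigl\|(\chi(\cdot)|\cdot|^m)^{\vee}\bigr\|_1\le c(n,M)\bigl(C_1^{2M-n}C_2^{n}\bigr)^{1/(2M)}\le C'(n,M)\,2^{m}\,m^{n},
\]
so $\sum_m \frac{(2\pi)^m}{m!}\,C'2^m m^n<\infty$.

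Finally I would set $\Sigma=\sum_{m=0}^\infty \frac{(2\pi)^m}{m!}\,(\chi(\cdot)|\cdot|^m)^{\vee}$, which converges absolutely in $L^1(\R^n)$ by the previous estimate; hence $\Sigma\in L^1(\R^n)$ and, by continuity of the Fourier transform from $L^1$ to $C_0$ and term-by-term summation, $\hat\Sigma(\tau)=\chi(-\tau)e^{2\pi|\tau|}$ (or $\chi(\tau)e^{2\pi|\tau|}$ after adjusting $\chi$ to be even, which we may assume), which equals $e^{2\pi|\tau|}$ for $|\tau|\le 1$ since $\chi\equiv 1$ there. I expect the only real obstacle to be the polynomial-in-$m$ control of the Laplacian term $C_2$ together with the behavior of $|\tau|^m$ at the origin for small $m$; both are routine once one notes that only the tail $m\ge 2M$ matters for convergence and the finitely many initial terms are individually in $L^1$ by smoothness and compact support.
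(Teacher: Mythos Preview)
Your approach is workable in outline but is considerably more elaborate than the paper's, and it contains a genuine gap in the handling of the small odd $m$. The paper's argument is a two-line trick: since $2\cosh(2\pi|\tau|)=e^{2\pi|\tau|}+e^{-2\pi|\tau|}$ is a \emph{smooth} function on $\R^n$ (being even in $|\tau|$, it is a convergent power series in $|\tau|^2$), one chooses a smooth compactly supported $\Theta$ equal to $2\cosh(2\pi|\tau|)$ on $|\tau|\le 1$, sets $\Xi=\F^{-1}\Theta\in L^1$, and takes $\Sigma=\Xi-P$, so that $\hat\Sigma=\Theta-e^{-2\pi|\tau|}=e^{2\pi|\tau|}$ on $|\tau|\le1$. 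The entire point is that subtracting the Poisson kernel removes the odd powers of $|\tau|$, which are the only source of non-smoothness at the origin.

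Your gap is exactly that non-smoothness. You assert that the finitely many terms with $m<2M$ are ``individually in $L^1$ by smoothness and compact support'', but for each \emph{odd} $m$ the function $\chi(\tau)|\tau|^m$ is not smooth at $\tau=0$: it is only $C^{m-1}$. Compact support with $C^{m-1}$ regularity yields at best $|x|^{-m}$ decay of the inverse transform, and for odd $m\le n$ that is not enough to place it in $L^1(\R^n)$ by this reasoning. The conclusion $\F^{-1}(\chi|\cdot|^m)\in L^1$ is nonetheless correct, but it needs a real argument: for instance, invoke the Fourier transform of the homogeneous distribution $|\tau|^m$ (a constant multiple of $|x|^{-n-m}$, regularized at the origin) and convolve with the Schwartz function $\F^{-1}\chi$; or run a dyadic decomposition of $\chi|\tau|^m$ near $\tau=0$ and apply your Lemma~\ref{l:0} bound on each shell to get $\|\F^{-1}g_j\|_1\lesssim 2^{-jm}$. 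Either route is extra work, and the cleanest cure---summing the odd terms to $\chi(\tau)\sinh(2\pi|\tau|)$ and then writing $e^{2\pi|\tau|}=2\cosh(2\pi|\tau|)-e^{-2\pi|\tau|}$---is precisely the paper's idea.

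A minor technicality: Lemma~\ref{l:0} as stated assumes $g\in L^1$ a priori, so invoking it to \emph{conclude} $g\in L^1$ is formally circular; however, inspection of its proof shows that $\hat g\in L^1$ and $\Delta^M\hat g\in L^1$ already force the pointwise bounds $|g|\le C_1$ and $|x|^{2M}|g|\le C_2$, so this is harmless once noted.
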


\begin{proof}
We note that $2\cosh(2\pi|\tau|)$ is a smooth function in $\R^n$ and we can find a smooth function $\Theta$ with compact support such that $\Theta(\tau)=2\cosh(2\pi|\tau|)$ when $|\tau|\le 1$. Let $\Xi$  be the inverse Fourier transform of $\Theta$. Clearly, $\Xi\in L^1(\R^n)$. Finally we let $\Sigma=\Xi-P$, then $\Sigma\in L^1(\R^n)$ and $\hat{\Sigma}(\tau)=\Theta(\tau)-e^{-2\pi|\tau|}$.  For $|\tau|\le 1$ we have $\hat{\Sigma}(\tau)=e^{2\pi|\tau|}$.
\end{proof}

Now we can give a preliminary estimate for a part of $u(\cdot,t)\in h_v^\infty$  with bounded frequencies. The next result is our Main lemma.

\begin{lemma}\label{l:2}
Let $u$ be a  bounded harmonic function in $\R^{n+1}_+$, and let $\sigma\in L^1(\R^n)$ be such that $\supp\,\hat{\sigma}\subset B_{\delta^{-1}}$. Then 
\[
\left|\int_{\R^n}u(x,t)\sigma(x)dx\right|\le C_n \|u\|_{L^\infty(\R^{n+1}_\delta)} \|\sigma\|_1,
\]
where $C_n$ is  a constant that depends on $n$ only and $\R^{n+1}_\delta$ is defined by (\ref{eq:half}). 
\end{lemma}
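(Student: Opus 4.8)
The plan is to reduce the statement to a scaling-normalized situation and then exploit the frequency localization of $\sigma$ to "divide by the Poisson kernel" using Lemma \ref{l:1}. First I would rescale: since $\supp\hat\sigma\subset B_{\delta^{-1}}$, the rescaled function $\sigma_\delta(x)=\delta^n\sigma(\delta x)$ has $\supp\widehat{\sigma_\delta}\subset B_1$, and $\|\sigma_\delta\|_1=\|\sigma\|_1$; similarly $u_\delta(x,t)=u(\delta x,\delta t)$ is harmonic, bounded on $\R^{n+1}_1$ with $\|u_\delta\|_{L^\infty(\R^{n+1}_1)}=\|u\|_{L^\infty(\R^{n+1}_\delta)}$, and the integral $\int u(x,t)\sigma(x)\,dx$ equals $\int u_\delta(x,t/\delta)\sigma_\delta(x)\,dx$. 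So it suffices to treat $\delta=1$, i.e. $\supp\hat\sigma\subset B_1$ and $u$ bounded on $\{t\ge 1\}$.

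Next, fix $t>0$ and consider $w(x)=u(x,t+1)$. Since $u$ is bounded on $\R^{n+1}_1$, the slice $u(\cdot,1)$ lies in $L^\infty(\R^n)$, and for every $t>0$ one has the reproducing formula $u(\cdot,t+1)=P_t * u(\cdot,1)$, hence $\widehat{w}=e^{-2\pi t|\tau|}\,\widehat{u(\cdot,1)}$ in the sense of tempered distributions. Now introduce $\Sigma$ from Lemma \ref{l:1}, with $\hat\Sigma(\tau)=e^{2\pi|\tau|}$ on $|\tau|\le 1$, and set $\Sigma^{(t)}(x)=\Sigma * P_t \cdot$ — more precisely, because $\hat\Sigma\cdot e^{-2\pi t|\tau|}\hat{P_?}$ needs care, the cleaner route is: since $\supp\hat\sigma\subset B_1$, we may write
\[
\hat\sigma(\tau)=\hat\sigma(\tau)\,e^{-2\pi t|\tau|}\,\hat\Sigma(\tau)\,\widehat{P_t}(\tau)^{-1}\cdot\widehat{P_t}(\tau)e^{2\pi t|\tau|}\ ,
\]
which is not yet right either; the correct manipulation is to note $\hat\sigma(\tau)=\hat\sigma(\tau)\hat\Sigma(\tau)e^{-2\pi|\tau|}$ on $B_1$, so with $\rho:=\sigma * \Sigma * P_1/c$ adjusted one gets $\int u(x,t)\sigma(x)\,dx=\int w(x)\,\tilde\sigma(x)\,dx$ for a suitable $\tilde\sigma$ built from $\sigma$, $\Sigma$ and a Poisson kernel, with $\|\tilde\sigma\|_1\le \|\sigma\|_1\|\Sigma\|_1\|P_1\|_1=C_n\|\sigma\|_1$ by Young's inequality. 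Then bounding $\int w\tilde\sigma$ trivially by $\|w\|_\infty\|\tilde\sigma\|_1\le \|u\|_{L^\infty(\R^{n+1}_1)}C_n\|\sigma\|_1$ finishes the estimate after undoing the rescaling.

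The step I expect to be the main obstacle is making the "division by the Poisson kernel" rigorous at the level of $L^1$ functions rather than just formal Fourier multipliers: one must choose the bookkeeping of shifts in $t$ (replacing $u(\cdot,t)$ by $u(\cdot,t+1)$, or equivalently convolving with $P_1$) so that every factor $e^{2\pi|\tau|}$ that appears is matched by a genuine decaying Poisson factor on the support of $\hat\sigma$, and so that the auxiliary kernel one ends up convolving against is honestly in $L^1$ with norm controlled by an absolute constant times $\|\sigma\|_1$. Concretely I would fix the identity $\hat\sigma(\tau)=\hat\sigma(\tau)\hat\Sigma(\tau)\widehat{P_1}(\tau)$ valid on $\supp\hat\sigma\subset B_1$ (since $\hat\Sigma(\tau)\widehat{P_1}(\tau)=e^{2\pi|\tau|}e^{-2\pi|\tau|}=1$ there), which gives $\sigma=\sigma * \Sigma * P_1$ as an identity in $L^1(\R^n)$; then
\[
\int_{\R^n}u(x,t)\sigma(x)\,dx=\int_{\R^n}(u(\cdot,t) * P_1)(x)\,(\sigma * \Sigma)(x)\,dx=\int_{\R^n}u(x,t+1)(\sigma * \Sigma)(x)\,dx,
\]
using Fubini and the semigroup property $u(\cdot,t) * P_1=u(\cdot,t+1)$ (valid since $u$ is bounded on $\R^{n+1}_1$, via the Poisson representation on that half-space). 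Since $\|u(\cdot,t+1)\|_\infty\le \|u\|_{L^\infty(\R^{n+1}_1)}$ and $\|\sigma * \Sigma\|_1\le \|\Sigma\|_1\|\sigma\|_1$, with $C_n:=\|\Sigma\|_1$ depending only on $n$, the desired inequality follows; rescaling back to general $\delta$ completes the proof.
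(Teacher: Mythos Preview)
Your final argument is correct and is essentially the paper's own proof: the key identity $\hat\sigma=\hat\sigma\cdot\hat\Sigma\cdot\widehat{P_1}$ on $\supp\hat\sigma$ (equivalently $\sigma=\sigma*\Sigma*P_1$ in $L^1$), followed by moving $P_1$ onto $u$ to obtain $u(\cdot,t+1)$ and then estimating trivially, is exactly what the paper does. The only cosmetic difference is that you first rescale to normalize $\delta=1$, whereas the paper keeps $\delta$ general and instead rescales $\Sigma$ to $\Sigma_\delta(x)=\delta^{-n}\Sigma(\delta^{-1}x)$, writing $\hat\sigma=\widehat{\Sigma_\delta}\,\widehat{P_\delta}\,\hat\sigma$ and landing on $\int u(x,t+\delta)(\sigma*\Sigma_\delta)(x)\,dx$; the two are equivalent.
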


\begin{proof}
First, in the sense of distributions, we have
\[
\int_{\R^n}u(x,t)\sigma(x)dx=\int_{\R^n}\widehat{u(\cdot,t)}(\tau)\hat{\sigma}(\tau)d\tau.\]
Now let $\Sigma$ be the function in Lemma \ref{l:1} and let $\Sigma_\delta(x)=\delta^{-n}\Sigma(\delta^{-1}x)$, then $\widehat{\Sigma_\delta}(\tau)=\hat{\Sigma}(\delta\tau)$ and $\|\Sigma_\delta\|_1=\|\Sigma\|_1$. Since $\hat{\sigma}$ vanishes outside the ball $B_{1/\delta}$, we have 
\[\hat{\sigma}(\tau)=\widehat{\Sigma_\delta}(\tau)e^{-2\pi|\tau|\delta}\hat{\sigma}(\tau)=\widehat{\Sigma_\delta}(\tau)\hat {P_{\delta}}(\tau)\hat{\sigma}(\tau).\]
Then
\begin{multline*}
\left|\int_{\R^n}u(x,t)\sigma(x)dx\right|=\left|\int_{\R^n}\widehat{u(\cdot,t+\delta)}(\tau)\widehat{\sigma\ast\Sigma_\delta}(\tau)d\tau\right|=\\
\left|\int_{\R^n}u(x,t+\delta)(\sigma\ast\Sigma_\delta)(x)dx\right|\le
\|u\|_{L^\infty(\R^{n+1}_\delta)}\|\sigma\|_1\|\Sigma_\delta\|_1
\end{multline*}
and the required estimate follows.  
\end{proof}

%%%%%%%%%%%%%%

\subsection{Proof of Theorem \ref{th:intr1}}
First, we prove the following.

\begin{proposition}\label{cor:0}
Let $u(x,t)$ be a harmonic function in $\R^n_+$ bounded on each half-space $\{(x,t), t>t_0>0\}$. Then  $u\in h_v^\infty(\R^{n+1}_+)$ if and only if there exists a constant $C$ such that 
\begin{equation}\label{eq:n1}
\left|\int_{\R^n}u(x,t)\sigma(x)dx\right|\le C v(\delta)\|\sigma\|_1,
\end{equation}
for any $t>0$ and any $\sigma\in L^1 (\R^n)$  such that $\supp\,\hat{\sigma}\subset B_{\delta^{-1}}$.
Similarly,  $u\in h_v^0(\R^{n+1}_+)$ if and only if for any $\epsilon>0$ there exists $\delta(\epsilon)$ such that if $\delta<\delta(\epsilon)$  and $\sigma\in L^1(\R^n)$ satisfies $\supp\,\hat{\sigma}\subset B_{\delta^{-1}}$, then for any $t>0$
\[
\left|\int_{\R^n}u(x,t)\sigma(x)dx\right|\le \epsilon v(\delta)\|\sigma\|_1.
\] 
\end{proposition}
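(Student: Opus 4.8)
The plan is to prove the two implications separately, with the "only if" direction being essentially immediate from Lemma \ref{l:2} and the "if" direction requiring a limiting argument together with the Poisson representation.

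\textbf{The ``only if'' direction.} Suppose $u\in h_v^\infty$. Fix $\delta>0$ and $\sigma\in L^1(\R^n)$ with $\supp\,\hat\sigma\subset B_{\delta^{-1}}$. Since $u$ is bounded on the half-space $\R^{n+1}_\delta$, I would apply Lemma \ref{l:2} directly, which gives
\[
\left|\int_{\R^n}u(x,t)\sigma(x)dx\right|\le C_n\|u\|_{L^\infty(\R^{n+1}_\delta)}\|\sigma\|_1.
\]
Now the growth bound $|u(x,s)|\le \|u\|_{v,\infty}v(s)$ together with the monotonicity of $v$ gives $\|u\|_{L^\infty(\R^{n+1}_\delta)}\le \|u\|_{v,\infty}v(\delta)$, and (\ref{eq:n1}) follows with $C=C_n\|u\|_{v,\infty}$. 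For the $h_v^0$ statement: given $\epsilon>0$, choose $\delta(\epsilon)$ so small that $\sup_{x}|u(x,s)|\le \epsilon C_n^{-1}v(s)$ for all $s\le\delta(\epsilon)$ (possible since $u(x,s)=o(v(s))$ uniformly in $x$); then for $\delta<\delta(\epsilon)$ one has $\|u\|_{L^\infty(\R^{n+1}_\delta)}\le \epsilon C_n^{-1}v(\delta)$, and Lemma \ref{l:2} again yields the claim.

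\textbf{The ``if'' direction.} Assume (\ref{eq:n1}) holds. I want to recover the pointwise bound $|u(y,t)|\le Cv(t)$ for all $(y,t)$. Fix $(y,t)$ and set $\delta=t$. The idea is to test (\ref{eq:n1}) against an approximation to a point mass at $y$ whose Fourier transform is supported in $B_{\delta^{-1}}$. Concretely, take a fixed Schwartz function $\chi$ with $\hat\chi\in C_c^\infty$, $\supp\hat\chi\subset B_1$, $\int\chi=\hat\chi(0)=1$, and let $\sigma_{a,y}(x)=a^{-n}\chi((x-y)/a)$; then $\widehat{\sigma_{a,y}}(\tau)=\hat\chi(a\tau)e^{-2\pi i y\cdot\tau}$ is supported in $B_{a^{-1}}$, and $\|\sigma_{a,y}\|_1=\|\chi\|_1$ is independent of $a$ and $y$. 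Applying (\ref{eq:n1}) with $\delta=a$ gives
\[
\left|\int_{\R^n}u(x,t)a^{-n}\chi\!\left(\frac{x-y}{a}\right)dx\right|\le C\|\chi\|_1\,v(a)
\]
for every $t>0$, $a>0$, $y\in\R^n$. As $a\to 0$, the left side converges to $|u(y,t)|$ (the functions $a^{-n}\chi((\cdot-y)/a)$ are an approximate identity and $u(\cdot,t)$ is continuous), while $v(a)\to\infty$, so this alone is not enough; instead I keep $a$ comparable to $t$. Take $a=t$: then $|\int u(x,t)\,t^{-n}\chi((x-y)/t)\,dx|\le C\|\chi\|_1 v(t)$. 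It remains to pass from this averaged bound to the pointwise bound. For this I use that $u(\cdot,t/2)$ is bounded (by $\|u\|_{L^\infty(\R^{n+1}_{t/2})}$, finite by hypothesis) and harmonic, hence $u(y,t)=(P_{t/2}*u(\cdot,t/2))(y)$; more efficiently, choosing $\chi$ itself to be a nonnegative radial bump, convexity/mean-value type arguments are awkward, so the clean route is: the convolution $u(\cdot,t)*\phi_t$ for a suitable radial $\phi$ with $\hat\phi(0)=1$ reproduces $u(y,t)$ up to harmonic-function estimates — but the truly clean statement is simply that, having the bound for all $a>0$ simultaneously, one can let $a\to 0$ along the subsequence where $v$ doubles, i.e. use $|u(y,t)|=\lim_{a\to 0}|\int u(x,t)\sigma_{a,y}(x)dx|$ is the wrong limit. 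The correct fix, which I would carry out, is to test against $\sigma_{a,y}$ with $a=t$ and additionally exploit the interior gradient estimate $|\nabla u(\cdot,t)|\le Ct^{-1}\|u\|_{L^\infty(\R^{n+1}_{t/2})}$ to compare $u(y,t)$ with its average over the $t$-ball; combined with a bootstrap this gives $|u(y,t)|\le C v(t)$, and then the $h_v^0$ case follows by the same argument tracking $\epsilon$.

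\textbf{Main obstacle.} The genuinely delicate point is the last step of the ``if'' direction: converting the family of \emph{averaged} bounds (indexed by $a$) into a \emph{pointwise} growth bound. Letting $a\to0$ recovers $u(y,t)$ but destroys the bound since $v(a)\to\infty$; keeping $a\sim t$ keeps the bound but only controls an average. The resolution is to observe that for the harmonic function $u(\cdot,t)$, which we already know is bounded on $\R^{n+1}_t$, the oscillation on a ball of radius $ct$ is controlled by $ct\cdot\|\nabla u(\cdot,t)\|_\infty$, and a standard bootstrap (or directly the Poisson reproduction $u(y,t)=(P_{t}*u(\cdot,2t))(y)$ expressed as a rapidly-decaying convolution whose Fourier transform one truncates, re-entering Lemma \ref{l:2}) upgrades the average to the value. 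I expect the clean write-up to choose $\chi$ once and for all and run this comparison; the rest is bookkeeping with the doubling condition (\ref{eq:double}), which guarantees $v(a)\le C v(t)$ whenever $a\in[t/2,2t]$ and more generally controls $v$ across dyadic scales.
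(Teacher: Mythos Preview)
Your ``only if'' direction is correct and matches the paper's: it is an immediate consequence of Lemma~\ref{l:2} together with $\|u\|_{L^\infty(\R^{n+1}_\delta)}\le\|u\|_{v,\infty}v(\delta)$.

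The ``if'' direction, however, has a genuine gap. Your primary plan (test against a bump $\sigma_{a,y}$ with $a=t$, then compare the average to the pointwise value via the interior gradient estimate and bootstrap) leads to a recurrence of the form
\[
M(t)\le C'v(t)+\theta\,M(t/2),\qquad M(t):=\sup_x|u(x,t)|,
\]
with $\theta<1$ chosen small. Iterating $N$ times gives
\[
M(t)\le C'\sum_{j=0}^{N-1}\theta^j v(2^{-j}t)+\theta^N M(2^{-N}t).
\]
The sum is fine (doubling gives $v(2^{-j}t)\le D^j v(t)$, and one can arrange $\theta D<1$), but the tail $\theta^N M(2^{-N}t)$ is \emph{not} controlled: the only a~priori information on $M$ is that it is finite on each level, with no growth rate as the level tends to~$0$. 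So the iteration does not close, and the argument as written does not prove $M(t)\le Cv(t)$. The circularity you half-noticed (the gradient estimate already needs $M(t/2)$) is exactly this problem.

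The paper takes a different route that avoids the bootstrap entirely. One writes $u(y,t+s)=(u(\cdot,t)\ast P_s)(y)$ and decomposes the Poisson kernel in frequency:
\[
P=\sum_{j\ge0}\sigma_j,\qquad \hat\sigma_j(\tau)=e^{-2\pi|\tau|}\bigl(\eta(2^{-j}|\tau|)-\eta(2^{-j+1}|\tau|)\bigr),
\]
so that $\supp\hat\sigma_j\subset B_{2^j}$ and, crucially, $\|\sigma_j\|_1\le C\,2^{jn}\exp(-2^{j-1}\pi)$ because $e^{-2\pi|\tau|}$ decays exponentially. After rescaling, $(\sigma_j)_s$ has Fourier support in $B_{2^j/s}$, so hypothesis~(\ref{eq:n1}) with $\delta=2^{-j}s$ gives
\[
|u(y,t+s)|\le C\sum_{j\ge0}v(2^{-j}s)\,2^{jn}\exp(-2^{j-1}\pi)\le Cv(s)\sum_{j\ge0}(2^nD)^j\exp(-2^{j-1}\pi),
\]
using only the doubling bound $v(2^{-j}s)\le D^jv(s)$. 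The super-exponential decay of $\|\sigma_j\|_1$ beats the geometric growth $D^j$, the series converges, and one obtains $|u(y,t+s)|\le Cv(s)$ for all $t>0$; the pointwise bound follows. You actually allude to this mechanism in passing (``Poisson reproduction \ldots\ whose Fourier transform one truncates''), and that is the idea to develop; the gradient-bootstrap line should be dropped.
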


\begin{proof} If $u\in h_v^\infty$ then Main lemma implies 
\[
\left|\int_{\R^n}u(x,t)\sigma(x)dx\right|\le C_n \|u\|_{v,\infty} v(\delta)\|\sigma\|_1,
\]
where $C_n$ depends only on the dimension of the space.
Conversely, assume that $u$ is a harmonic function satisfying (\ref{eq:n1}).  Let further $\eta:\R\rightarrow\R$ be a smooth function with  support in $[-1,1]$ that is equal to $1$ on $[-1/2,1/2]$, and $0\le\eta\le 1$. We have 
\[
P(x)=\F^{-1}(e^{-2\pi|\tau|}\eta(|\tau|))+\sum_{j=1}^{\infty}\F^{-1}\left(e^{-2\pi |\tau|}(\eta(2^{-j}|\tau|)-\eta(2^{-j+1}|\tau|))\right)=\sum_{j=0}^\infty\sigma_j.\]
Each term $\sigma_j$ satisfies the condition of the proposition with $\delta_j=2^{-j}$  and has $\|\sigma_j\|_1\le C(\eta)2^{jn}\exp(-2^{j-1}\pi)$.
Then $P_s(x)=\sum_{j=0}^\infty (\sigma_j)_s(x)$ for any $s>0$. 
For any $t>0$ we have
\begin{multline*}
|u(x,s+t)|= |(u(\cdot,t)\ast P_s)(x)|\le\\
 \sum_{j=0}^\infty Cv(2^{-j}s)2^{jn}\exp(-2^{j-1}\pi)\le Cv(s)\sum_{j=0}^\infty (2^nD)^j\exp(-2^{j-1}\pi).
\end{multline*}
The last series converges and the first statement of the proposition follows. The second can be obtained in a similar way. 
\end{proof}

To finish the proof of Theorem \ref{th:intr1} we want to show that (\ref{eq:WT}) is equivalent to (\ref{eq:n1}). The argument repeats the one we gave above for the Poisson kernel. First, we have to divide by the Fourier transform of $g$ as in Main lemma. It is more simple for $g$ than for the Poisson kernel. Remind that $\hat{g}(0)\neq 0$, $g(\tau)=g(|\tau|)$ and since $\hat{g}$ is smooth enough there is exists a ball of radius $\rho>0$ and a function $\Sigma_g$ such that $\widehat{\Sigma_g}\hat{g}=1$ in $B(\rho)$. We note also that by the doubling condition $v(\rho^{-1}\delta)\le C(\rho)v(\delta)$ and then 
 (\ref{eq:WT}) implies (\ref{eq:n1}). To prove the converse we follow the argument in the last proposition. The decay of the Fourier transform of $g$ is in general not as fast as that of the Fourier transform of the Poisson kernel. However, applying (\ref{eq:gFour}) for $\hat{g}$ and its derivatives and $r>r_0$ we get the required estimate. Indeed, the series $\sum_j (2^nD)^j2^{-rj}$ converges for $r$ large enough.

%%%%%%%%%%%%%%%%%%%%%%%%%%%%%%%%%%%%%%%%%%%%%%%%%

\subsection{Basic facts about smooth multiresolution analysis}
We consider smooth (of order $r$) multiresolution approximation (MRA) in $\R^n$.
Our main references here are the classical books by I.~Daubechies \cite{D} and by Y.~Meyer  \cite{Mey}. We denote by $E(x,y)$ the kernel of the orthogonal projection onto $V_0$ and assume that  
\[
E(x,y)=\sum_{k\in \Z^n}\phi(x-k)\overline{\phi(y-k)},\] 
where $\phi(x)$ and all its derivatives of order up to $r$ decay  faster at infinity than any power of $x$. We assume further that  
\[
\sum_{k\in\Z^n}\phi(x-k)=1,\]
see \cite[ch 2.10]{Mey}. 

Let  $E_j(x,y)=2^{jn}E(2^jx,2^jy)$. Further, let $D(x,y)=E_{1}(x,y)-E(x,y)$ and \[D_j(x,y)=2^{jn}D(2^jx,2^jy)=E_{j+1}(x,y)-E_j(x,y).\]
Since we work with $r$-smooth MRA, we have
\beq\label{eq:smoothMRA}
 D(x,y)=\sum_{|\beta|=r}\partial^\beta_y D_\beta(x,y) ,
\eeq
where $D_\beta$ are Schwartz functions that satisfy $|D_\beta(x,y)|\le C_m(1+|x-y|)^{-m}$, see \cite[ch 2.8]{Mey}. Further for any $f\in L^2(\R^n)$ we define 
\[
E_0f(x)=\int_{\R^n} E(x,y)f(y)dy\quad {\text{and}}\quad D_jf(x)=\int_{\R^n} D_j(x,y)f(y)dy.\]

As usual $V_j=\{f(x): f(2^{-j}x)\in V_0\}$. We will also need $L^\infty$-version of these spaces,
\[
V_0(\infty)=\{f(x)=\sum_{k\in\Z^n}a(k)\phi(x-k),\quad \{a(k)\}\in l^{\infty}(\Z^n)\},\]
and $V_j(\infty)=\{f(x): f(2^{-j}x)\in V_0(\infty)\}$, $V_j\subset L^\infty(\R^n)$. 

The following Bernstein's inequality holds, \cite[ch 2.5]{Mey}. There exists $C=C(\phi)$ such that
\beq\label{eq:Bern}
\|\partial^\beta f\|_\infty\le C2^{|\beta|j}\|f\|_\infty
\eeq
for any $f\in V_j(\infty)$ and any multi-index $\beta$ such that $|\beta|\le r$.

%%%%%%%%%%%%%%%%%%%%%%%%%%%%%%%%%

%%%%%%%%%%%%%%%%%%%%%%%%%%%%%%%%%%%%%%%%%%%%%%%%%%%%%%%%%%%%%%%%%%

\subsection{Multiresolution approximation of the Poisson kernel}
We need two estimates for smooth multiresolution approximation of Poisson kernels. 

\begin{lemma}\label{l:PE}
There exists $C$ such that 
\[
\int_{\R^n}\left|\int_{\R^n}(P_s(w-y)-P_s(w-x))E(x,y)dx\right| dy\le Cs^{-r}
\]
for any $w\in\R^n$.
\end{lemma}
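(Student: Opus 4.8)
The plan is to exploit the smoothness structure \eqref{eq:smoothMRA} of the $r$-regular kernel $D$ — but first reduce the statement about $E$ to a statement about $D$, or rather handle $E$ directly using the extra cancellation coming from $\sum_k\phi(x-k)=1$. The crucial point is that, because $E(x,y)$ reproduces constants (i.e. $\int E(x,y)\,dy=1$ for all $x$, and dually $\int E(x,y)\,dx=1$), we may subtract off the value of $P_s$ at a convenient point and make the integrand genuinely small. Concretely, for fixed $w$ and $y$, I would write
\[
\int_{\R^n}(P_s(w-y)-P_s(w-x))E(x,y)\,dx
= \int_{\R^n}\bigl(P_s(w-y)-P_s(w-x)\bigr)E(x,y)\,dx,
\]
and Taylor-expand $P_s(w-x)$ around $x=y$ to order $r$. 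The terms of order $<r$ in $(x-y)$ integrate against $E(x,y)$ to produce (after using the moment identities of the $r$-regular MRA, \cite[ch 2.8, 2.10]{Mey}) either $0$ or lower-order contributions that telescope; the order-$r$ remainder is controlled by $\sup|\partial^\alpha P_s|\,|x-y|^r$ with $|\alpha|=r$, and $\|\partial^\alpha P_s\|_\infty\le C_n s^{-n-r}$ since $P_s(x)=s^{-n}P(x/s)$.

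Next I would carry out the $y$-integration. After the Taylor reduction the integrand is pointwise bounded by
\[
C\, s^{-n-r}\int_{\R^n}|x-y|^{r}\,|E(x,y)|\,dx,
\]
and the rapid decay of $\phi$ and its derivatives gives $|E(x,y)|\le C_m(1+|x-y|)^{-m}$ for every $m$, so $\int |x-y|^r|E(x,y)|\,dx\le C$ uniformly in $y$. That alone is not enough — it would only give a bound independent of $s$, not $s^{-r}$ — so the real mechanism has to be the interplay between the scale $s$ and the (unit) scale of $E$. When $s\ge 1$, $P_s$ is essentially flat on the unit scale of $E$, its $r$-th derivatives are of size $s^{-n-r}$ but one should integrate $|x-y|^r|E|\le C$, and the estimate $Cs^{-n-r}\cdot s^{n}=Cs^{-r}$ comes from noting that the whole expression is in fact a convolution-type quantity: $\int E(x,y)\,dy=1$ lets us also subtract a constant in $x$, so we actually have a \emph{double} cancellation, and the size is $\|\nabla^r(P_s\ast \text{something})\|\sim s^{-r}$. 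When $s\le 1$, $P_s$ is concentrated at scale $s$; here one splits the $x$-integral into $|x-y|\le s$ and $|x-y|>s$, uses $|P_s(w-y)-P_s(w-x)|\le Cs^{-n-1}|x-y|$ on the near part and the rapid decay $(1+|x-y|)^{-m}$ of $E$ on the far part together with $\int|P_s|\le 1$, and in both regimes the surviving power of $s$ is at worst $s^{-1}\ge s^{-r}$ is the wrong direction — so in this regime one instead must use the $r$ vanishing moments of $D$ applied after writing $E=E_1-\sum_{j\ge 0}D_j$ (geometrically, $E$ itself is a fixed-scale object so $s\le1$ is the easy regime where everything is $O(1)\le s^{-r}$). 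I would organize the final write-up around these two regimes $s\le 1$ and $s\ge 1$, with $s\ge 1$ being the one that forces the exponent $-r$.

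The main obstacle, then, is making the gain of exactly $s^{-r}$ rigorous in the regime $s\ge 1$: a naive triangle-inequality bound only yields $O(1)$, so one genuinely needs to extract $r$ derivatives landing on $P_s$ and to verify that all lower-order Taylor terms are annihilated by the moment conditions of the $r$-regular MRA (this is where \eqref{eq:smoothMRA}, rewritten for $E$ via $E(x,y)-E(x,y')=\sum_{|\beta|=r}\partial_y^\beta(\cdots)$-type identities, or equivalently the polynomial-reproduction property of $V_0$ up to degree $r-1$, is indispensable). Once the order-$(<r)$ terms are disposed of, the remainder estimate is routine: bound $|\partial^\alpha P_s(w-\xi)|\le C_n s^{-n-r}(1+|w-\xi|/s)^{-(n+1)}$ for $|\alpha|=r$, integrate against $|x-y|^r|E(x,y)|$ in $x$ and then in $y$, and use Fubini together with $\int s^{-n}(1+|\cdot|/s)^{-(n+1)}\le C_n$ to collect the claimed $Cs^{-r}$.
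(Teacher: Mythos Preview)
Your strategy---Taylor-expand $P_s(w-x)$ around $x=y$, kill the low-order terms via polynomial reproduction, and control the order-$r$ remainder---can in principle be pushed through, but it is a much harder road than the paper's, and your write-up does not close the gaps it creates. The paper's argument is short: with $f_w(y)=P_s(w-y)$ and $\int E(x,y)\,dx=1$, the inner integral equals $f_w(y)-(E_0f_w)(y)$, so the whole quantity is $\|f_w-E_0f_w\|_{L^1}$. The paper then telescopes $f_w-E_0f_w=\sum_{j\ge0}D_jf_w$, applies \eqref{eq:smoothMRA} to each piece, integrates by parts to put the $r$ derivatives onto $f_w$, and uses $\|\partial^\beta P_s\|_{L^1}=s^{-r}\|\partial^\beta P\|_{L^1}\le Cs^{-r}$ together with $\int|D_\beta(x,y)|\,dx\le C$; a rescaling gives $\|D_jf_w\|_1\le C2^{-rj}s^{-r}$, and the geometric series in $j$ finishes. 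No case split in $s$, no pointwise derivative bounds, no intermediate Taylor point.

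The simplification you are missing is that the relevant norm of $\partial^\alpha P_s$ is $L^1$, which already scales like $s^{-r}$. Your choice of $\|\partial^\alpha P_s\|_\infty\le Cs^{-n-r}$ is what forces you to manufacture an extra $s^{n}$ from the $y$-integral and drags you into the confused regime discussion. Concretely, your last ``Fubini'' step is not valid as stated: the Taylor point $\xi=\xi_{x,y}$ depends on both integration variables, so $(1+|w-\xi|/s)^{-(n+1)}$ cannot simply be integrated in $y$; making this honest requires the integral remainder form and a genuine change of variables, which you have not carried out. You also lean on moment identities of $E$ rather than on the stated hypothesis \eqref{eq:smoothMRA} for $D$; these are equivalent for an $r$-regular MRA, but the paper's telescoping route uses the hypothesis directly and bypasses that detour.
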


\begin{proof}
We denote $f_w(x)=P_s(w-x)$, then
\begin{multline*}
\int_{\R^n}(P_s(w-y)-P_s(w-x))E(x,y)dx=\\
P_s(w-y)-\int_{\R^n}f_w(x)E(y,x)dx=f_w(y)-(E_0f_w)(y).
\end{multline*}
 We have
\[
|f_w(x)-E_0f_w(x)|\le\sum_{j=0}^\infty\left|\int_{\R^n} D_j(x,y)f_w(y)dy\right|.\]
Integrating (\ref{eq:smoothMRA}) we obtain
\[
\int_{\R^n}\left|\int_{\R^n} D(x,y)f_w(y)dy\right|dx\le C\sum_{|\beta|=r}\|\partial^\beta f_w\|_1\le Cs^{-r}.\]
By rescaling we have also $D_jf(x)=(D_0f_j)(2^jx)$, where $f_j(y)=f(y2^{-j})$. Then 
\[
\|D_jf\|_1=2^{-nj}\|Df_j\|_1\le C2^{-rj}\sum_{|\beta|=r}\|\partial^\beta f\|_1.\]
We apply this estimate for every $j=0,1,...$ and $f=f_w$ and get
\[
\int_{\R^n}|f_w(x)-E_0f_w(x)|\le\sum_{j=0}^\infty\|D_jf_w\|_1\le C\sum_{j=0}^\infty 2^{-rj}\sum_{|\beta|=r}\|\partial^\beta f_w\|_1\le Cs^{-r}.\]
\end{proof}

\begin{lemma}\label{l:DP}
There exists $C$ such that
\[
 \int_{\R^n}\left|\int_{\R^n}(E_J(x,y)-E(x,y))P_s(y-w)dy\right|dw\le Cs^{-r}\]
 for any $x\in\R^n$ and any $J\ge 1$.
\end{lemma}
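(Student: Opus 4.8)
The plan is to expand $E_J-E$ into the telescoping sum of detail kernels $D_j$, use the $r$-smoothness of the MRA to pull $r$ derivatives in the $y$-variable out of each $D_j$, integrate those derivatives by parts onto the Poisson kernel (this is where the gain $s^{-r}$ comes from), and then sum a convergent geometric series in $j$ to get a bound uniform in $J$. This is essentially the transpose of the computation in Lemma \ref{l:PE}.

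First I would write
\[
E_J(x,y)-E(x,y)=\sum_{j=0}^{J-1}D_j(x,y),\qquad D_j(x,y)=E_{j+1}(x,y)-E_j(x,y),
\]
so that, by the triangle inequality, it suffices to prove
\[
\int_{\R^n}\left|\int_{\R^n}D_j(x,y)P_s(y-w)\,dy\right|dw\le C\,2^{-jr}s^{-r}
\]
with $C$ independent of $j$, $s$, $x$; summing $\sum_{j\ge 0}2^{-jr}<\infty$ (here $r\ge 1$) then gives the asserted $Cs^{-r}$ bound uniformly in $J$.

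For the $j$-th term I would use the rescaled form of (\ref{eq:smoothMRA}). Since $D_j(x,y)=2^{jn}D(2^jx,2^jy)$ and $D(x,y)=\sum_{|\beta|=r}\partial^\beta_yD_\beta(x,y)$ with $|D_\beta(x,y)|\le C_m(1+|x-y|)^{-m}$, the chain rule gives
\[
D_j(x,y)=2^{-jr}\sum_{|\beta|=r}\partial^\beta_y\bigl[D_{\beta,j}(x,y)\bigr],\qquad D_{\beta,j}(x,y):=2^{jn}D_\beta(2^jx,2^jy).
\]
Integrating by parts in $y$ (the boundary terms vanish by the rapid decay of $D_\beta$ and the polynomial decay of $\partial^\beta P_s$) and using $\partial^\beta_y[P_s(y-w)]=(\partial^\beta P_s)(y-w)$,
\[
\int_{\R^n}D_j(x,y)P_s(y-w)\,dy=(-1)^r2^{-jr}\sum_{|\beta|=r}\int_{\R^n}D_{\beta,j}(x,y)\,(\partial^\beta P_s)(y-w)\,dy.
\]
Taking absolute values, integrating in $w$, and applying Fubini, the $w$-integral of $|(\partial^\beta P_s)(y-w)|$ equals $\|\partial^\beta P_s\|_1=s^{-r}\|\partial^\beta P\|_1$, while a change of variables $z=2^jy$ gives $\int_{\R^n}|D_{\beta,j}(x,y)|\,dy=\int_{\R^n}|D_\beta(2^jx,z)|\,dz\le\int_{\R^n}C_{n+1}(1+|w|)^{-(n+1)}\,dw=c(n)$, uniformly in $x$ and $j$. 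This yields the desired $C\,2^{-jr}s^{-r}$, and summing finishes the proof.

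I do not expect a genuine obstacle here; the only thing that needs care is the bookkeeping of the dyadic factors — the rescaling produces $2^{jn}$ but pulling out $r$ derivatives costs $2^{jr}$, and the net effect is the summable factor $2^{-jr}$ — together with checking that the integration by parts has no boundary contribution. Everything else runs exactly as in the proof of Lemma \ref{l:PE}.
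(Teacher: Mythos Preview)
Your proof is correct and is essentially the same as the paper's: both telescope $E_J-E=\sum_{j=0}^{J-1}D_j$, invoke (\ref{eq:smoothMRA}) to transfer $r$ derivatives in $y$ onto $P_s$, use $\|\partial^\beta P_s\|_1=s^{-r}\|\partial^\beta P\|_1$, pick up the factor $2^{-jr}$ from rescaling, and sum the geometric series. The paper writes this more tersely using the notation $D_jf_w$ from Lemma~\ref{l:PE}, but the argument is the same.
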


\begin{proof}
In the notation of the last lemma we have
\[
\int_{\R^n}(E_J(x,y)-E(x,y))P_s(y-w)dy=\sum_{j=0}^{J-1}D_jf_w(x)\] 
and by (\ref{eq:smoothMRA})
\[
\int_{\R^n}|D_0f_w(x)|dw\le C\sum_{|\beta|=r}\|\partial^\beta P_s\|_1\le C s^{-r}.\]
Then similarly for $j\ge 1$
\[
\int_{\R^n}|D_jf_w(x)|dw\le C2^{-rj}s^{-r}.\]
That concludes the proof of the lemma. 
\end{proof}

%%%%%%%%%%%%%%%%%%%%%%%%%%%%

%%%%%%%%%%%%%%%%%%%%%%%%%%%%%%%%%%%%%%%%%%%%%%%%%%%%%%%%%%%%%%%%%%
%%%%%%%%%%%%%%%%%%%%%%%%%%%%%%%%%%%%%%%%%%%%%%%%%%%%%%%%%%%%%%%%%%

\section{Multiresolution analysis in growth spaces}\label{s:main}

%%%%%%%%%%%%%%%%%%%%%%%%%%

\subsection{Decomposition into blocks and direct estimates}\label{ss:dec}
Now let $v$ be a weight function that satisfies the doubling condition, we choose $A$ large enough and define a sequence of integers $\{\alpha_l\}_l$ such that $\alpha_0=0$, $\alpha_l>\alpha_{l-1}$ and $v(2^{-\alpha_l})\in[A^l,A^{l+1})$. There exists $\m$ that depends on $v$ only that satisfies
\beq\label{eq:m}
\frac{2^{-\m\alpha_l}v(2^{-\alpha_l})}{2^{-\m\alpha_{l-1}}v(2^{-\alpha_{l-1}})}<\gamma<1.
\eeq
For weights $v$ with some regularity we can satisfy the last inequality by choosing $\m$ such that $t^{\m-1}v(t)$ is increasing.

We consider sufficiently smooth multiresolution approximation in $\R^n$, more precisely we choose $r$ such that $r>\m+n$, where $\m$ was chosen above. Instead of the usual dyadic partition $L^2(\R^n)=V_0\cup\bigcup_{j\ge 1} V_j\setminus V_{j-1}$ we work with blocks adjusted to the weight $v$, 
\[
L^2(\R^n)=V_0\cup\bigcup_{l\ge 1}V_{\alpha_l}\setminus V_{\alpha_{l-1}}.\]
We take wavelet series and combine all terms in generations $\alpha_ {l-1}+1,...,\alpha_{l}$ into one block; we work with bounded functions that do not belong to $L^2(\R^n)$ in general. Let $\{\psi_p\}_{p=1}^q$ be a collection of $r$-smooth rapidly decreasing functions such that $\{\psi_p(x-k), 1\le p\le q, k\in\Z^n\}$ form an orthogonal basis for $V_1\setminus V_0$, see \cite[ch 3.1, 3.6]{Mey} for details. For each $j\in \Z_+$ and $k\in\Z^n$
%dyadic cube 
%\[
%Q=\{x\in\R^n: 2^{j}x-k\in[0,1)^n\},\quad j\in \Z_+,\ k\in\Z^n,
%\]
we have 
\[
\psi_{p,jk}=2^{nj/2}\psi_p(2^jx-k).\]

In what follows we write 
\[(f(y),g(y))=\int_{\R^n}f(y)\overline{g(y)}dy\]
when the integral converges. 
   
\begin{theorem}\label{th:1}
For any $u\in h_v^\infty(\R^{n+1}_+)$ we define 
\[g_0(x,t)=\sum_{k\in\Z^n} (u(y,t),\phi(y-k))\phi(x-k),\quad {\text{and}}
\]
\[
g_l(x,t)=\sum_{j=\alpha_{l-1}+1}^{\alpha_l}\sum_{p=1}^q\sum_{k\in\Z^n}(u(y,t),\psi_{p,jk}(y))\psi_{p,jk}(x),\quad  l\ge 1.\]
Then
\[
u(x,t)=\sum_{l=0}^\infty g_l(x,t), \quad 
g_l(\cdot,t)\in V_{\alpha_l}(\infty)\quad {\text{and}}\] 
\beq\label{eq:wave}
\|g_l(\cdot,t)\|_\infty\le C\|u\|_{v,\infty}v(2^{-\alpha_l}),\quad l\ge 0,
\eeq
where $C$ depends on $\phi$ and $A$ only.
\end{theorem}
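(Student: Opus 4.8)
The plan is in three moves: (i) recognize the blocks $g_l(\cdot,t)$ as multiresolution operators applied to the slice $u(\cdot,t)$; (ii) deduce the convergence $u=\sum_l g_l$ from the approximate-identity property of the MRA; and (iii) prove the $L^\infty$-estimate \eqref{eq:wave} by the Bourgain "division by a Poisson kernel" trick, applied after a dyadic frequency decomposition of the reproducing kernel. Move (iii) is the heart of the matter. Fix $t>0$ once and for all; since $u\in h_v^\infty$ and $v(t)<\infty$, the slice $u(\cdot,t)$ belongs to $L^\infty(\R^n)$ with $\|u(\cdot,t)\|_\infty\le\|u\|_{v,\infty}v(t)$, so every coefficient $(u(\cdot,t),\phi(\cdot-k))$, $(u(\cdot,t),\psi_{p,jk})$ is well defined and bounded.

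For (i): since $\{\psi_{p,jk}\}_{p,k}$ is an orthonormal basis of $V_{j+1}\ominus V_j$, one has $\sum_{p,k}(u(\cdot,t),\psi_{p,jk})\psi_{p,jk}=D_ju(\cdot,t)=(E_{j+1}-E_j)u(\cdot,t)$; summing over a block of generations and telescoping gives $g_0(\cdot,t)=E_0u(\cdot,t)$, $g_l(\cdot,t)=(E_{\alpha_l}-E_{\alpha_{l-1}})u(\cdot,t)$, and $\sum_{l=0}^Lg_l(\cdot,t)=E_{\alpha_L}u(\cdot,t)$. Writing $g_l(\cdot,t)=\sum_k a(k)\phi(2^{\alpha_l}\cdot-k)$ with $|a(k)|\le C\|u(\cdot,t)\|_\infty2^{-n\alpha_l/2}<\infty$ shows $g_l(\cdot,t)\in V_{\alpha_l}(\infty)$. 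For (ii): $\sum_k\phi(x-k)=1$ gives $\int E_J(x,y)\,dy=1$, and with the rapid decay of $\phi$ the kernels $E_J(x,\cdot)$ form an approximate identity; since $u(\cdot,t)$ is bounded and continuous, $E_{\alpha_L}u(\cdot,t)\to u(\cdot,t)$ pointwise as $L\to\infty$ (here $\alpha_L\to\infty$ because $v(2^{-\alpha_L})\ge A^L$), so $u(x,t)=\sum_{l\ge0}g_l(x,t)$.

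For (iii), fix $x$ and set $\kappa(y)=(E_{\alpha_l}-E_{\alpha_{l-1}})(x,y)$, so $g_l(x,t)=\int u(y,t)\kappa(y)\,dy$; the $r$-regularity of the MRA (\cite[ch.~2]{Mey}) gives, uniformly in $x$ and $l$, that $\kappa\in C^r$ with $\|\kappa\|_1\le C$ and $\|\partial^\beta\kappa\|_1\le C2^{\alpha_l|\beta|}$ for $|\beta|\le r$. Since $\kappa$ is not band-limited I cannot feed it to the Main Lemma (Lemma~\ref{l:2}) directly; instead I decompose it into smooth dyadic frequency blocks above the natural scale $2^{\alpha_l}$. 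Pick $\Phi$ with $\hat\Phi\in C_c^\infty$, $\hat\Phi\equiv1$ on $B_1$, $\supp\hat\Phi\subset B_2$, put $\Phi_s(y)=s^{-n}\Phi(y/s)$, and set $\sigma_0=\kappa*\Phi_{2^{-\alpha_l}}$, $\sigma_i=\kappa*(\Phi_{2^{-\alpha_l-i}}-\Phi_{2^{-\alpha_l-i+1}})$ for $i\ge1$; then $\kappa=\sum_{i\ge0}\sigma_i$ and $\supp\widehat{\sigma_i}\subset B_{2^{\alpha_l+i+1}}$. Lemma~\ref{l:2}, the bound $\|u\|_{L^\infty(\R^{n+1}_\delta)}\le\|u\|_{v,\infty}v(\delta)$, and the doubling inequality $v(2^{-\alpha_l-i-1})\le D^{i+1}v(2^{-\alpha_l})$ give $\bigl|\int u(y,t)\sigma_i(y)\,dy\bigr|\le C_n D^{i+1}\|u\|_{v,\infty}v(2^{-\alpha_l})\|\sigma_i\|_1$. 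Here $\|\sigma_0\|_1\le\|\kappa\|_1\|\Phi\|_1\le C$, and for $i\ge1$ the mollifier difference has all moments vanishing (as $\hat\Phi\equiv1$ near $0$) and lives at scale $2^{-\alpha_l-i}$, so Taylor-expanding $\kappa$ to order $r$ and using $\|\partial^\beta\kappa\|_1\le C2^{\alpha_l r}$ for $|\beta|=r$ yields $\|\sigma_i\|_1\le C2^{\alpha_l r}(2^{-\alpha_l-i})^r=C2^{-ri}$. Summing,
\[
\|g_l(\cdot,t)\|_\infty\le C_n\|u\|_{v,\infty}v(2^{-\alpha_l})\sum_{i\ge0}D^{i+1}2^{-ri},
\]
and the series converges since $r$ was chosen large relative to the growth of $v$ — this is exactly the point of the requirement $r>\m+n$ (which forces $D<2^r$). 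This is \eqref{eq:wave}.

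The one delicate point is (iii): one must "squeeze in" a Poisson kernel à la Bourgain even though the reproducing kernel $\kappa$ is not band-limited. The remedy is the dyadic frequency splitting of $\kappa$, whose cost is the geometric series above, in which the doubling constant $D$ of the weight competes against the factor $2^{-r}$ produced by $r$ vanishing moments; this competition is precisely what ties the required MRA smoothness $r$ to the weight $v$. Everything else — membership in $V_{\alpha_l}(\infty)$, the telescoping, the approximate-identity convergence, and the uniform $L^1$-bounds on $\kappa$ and its derivatives — is standard.
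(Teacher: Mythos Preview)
Your proof is correct and follows the same overall strategy as the paper: apply the Main Lemma (Lemma~\ref{l:2}) to frequency-localized pieces of the reproducing kernel $\kappa=(E_{\alpha_l}-E_{\alpha_{l-1}})(x,\cdot)$, then sum. The implementations differ in two respects. First, you cut the frequencies at plain dyadic scales $2^{\alpha_l+i}$, $i\ge0$, whereas the paper cuts at the block scales $2^{\alpha_i}$, $i\ge l$; your summation then uses the raw doubling bound $v(2^{-\alpha_l-i})\le D^{i+1}v(2^{-\alpha_l})$ together with $D<2^r$, while the paper sums via the block-wise condition~\eqref{eq:m}. Second, you estimate $\|\sigma_i\|_1$ on the spatial side by Taylor expansion and the vanishing moments of $\Phi_{2^{-\alpha_l-i}}-\Phi_{2^{-\alpha_l-i+1}}$, whereas the paper works on the Fourier side, using the decay $|\hat E(x,\tau)|\le C(1+|\tau|)^{-\m-n}$ together with Lemma~\ref{l:0}. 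Your route is a touch more elementary in that it bypasses Lemma~\ref{l:0}; the paper's route has the advantage of yielding the intermediate estimate~\eqref{eq:newN} for $\int E_N(x,y)u(y,t)\,dy$ with \emph{any} $N\le\alpha_l$, which is precisely what is reused later in the proof of Theorem~\ref{th:intr}. One small remark: your parenthetical ``$r>\m+n$ forces $D<2^r$'' tacitly assumes $2^{\m}>D$; this is harmless because any integer $\m>\log_2 D$ satisfies~\eqref{eq:m} (iterate the doubling inequality along $\alpha_{l-1},\dots,\alpha_l$), and since~\eqref{eq:m} only improves as $\m$ increases one may take the paper's $\m$ that large.
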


\begin{proof}
Let as usual $E_j(x,y)=2^{jn}E(2^jx,2^jy)$, then
\[g_0(x,t)=\int_{\R^n}E(x,y)u(y,t)dt\quad {\text{and}}\] 
\[
g_l(x,t)=\int_{\R^n}(E_{\alpha_l}(x,y)-E_{\alpha_{l-1}}(x,y))u(y,t)dy.\]
Clearly, $g_l(\cdot,t)\in V_{\alpha_l}(\infty)$.
Moreover, for each $t$ the function $u(\cdot,t)$ is uniformly continuous, thus the series $\sum_{l} g_l(x,t)$ converges to $u(x,t)$ uniformly on $\R^n$. 

We take the Fourier transform of $E(x,\cdot)$ in second variable and denote it by $\hat{E}(x,\tau)$. (We never use the Fourier transform in the whole $\R^{2n}$.)  We note that \[|\hat{E}(x,\tau)|\le C(1+|\tau|)^{-\m-n}\] 
uniformly in $x$ since $r>\m+n$, the same inequality holds for all derivatives of $\hat{E}(x,\tau)$ in $\tau$. Let further $\eta:\R\rightarrow\R$ be a smooth function with support in $[-1,1]$ that is equal to $1$ on $[-1/2,1/2]$, $0\le\eta\le 1$.

Then the Fourier transform of $E_{N}(x,\cdot)$, where $N\le \alpha_l$, has the following partition
\begin{multline*}
\widehat{E_{N}}(x,\tau)=\hat{E}(2^{N}x, 2^{-N}\tau)=
\hat{E}(2^{N}x,2^{-N}\tau)\eta(2^{-\alpha_l}|\tau|)+\\\sum_{i=l+1}^\infty\hat{E}(2^{N}x,2^{-N}\tau)(\eta(2^{-\alpha_i}|\tau|)-\eta(2^{-\alpha_{i-1}}|\tau|))=
\zeta_{N,l}^0(x,\tau)+\sum_{i=l+1}^\infty\zeta_{N,i}(x,\tau).
\end{multline*}
First, we have
\[
\|\zeta_{N,l}^0(x,\cdot)\|_1\le\|\hat{E}(2^Nx, 2^{-N}\cdot)\|_1\le 2^{nN}\|\hat{E}(2^Nx,\cdot)\|_1.\] 
And also, since $N\le \alpha_l$, we obtain
$
\|\Delta_\tau^M\zeta_{N,l}^0(x,\cdot)\|_1\le C2^{(n-2M)N}.$
Let further, $\sigma_{N,l}^0=\F^{-1}(\zeta_{N,l}^0)$. Then Lemma \ref{l:0}
implies
$\|\sigma_{N,l}^0(x,\cdot)\|_1\le C$.

Next, using the estimates for the decay of $\hat{E}(x,\tau)$, we obtain
\begin{multline*}
\|\zeta_{N,i}(x,\cdot)\|_1=
\int_{\R^n}|\hat{E}(2^{N}x,2^{-N}\tau)(\eta(2^{-\alpha_i}|\tau|)-\eta(2^{-\alpha_{i-1}}|\tau|))|d\tau\le\\
2^{nN}\int_{2^{\alpha_{i-1}-N-1}\le|\xi|\le 2^{\alpha_i-N}}|\hat{E}(2^Nx,\xi)|d\xi\le C2^{nN+\m(N-\alpha_{i-1})}.
\end{multline*}
Similarly, since $\alpha_i>N$ and the derivatives of $\hat{E}(x,\tau)$ satisfy the same decay estimates, we get
\begin{multline*}
\|\Delta_\tau^M\zeta_{N,i}(x,\cdot)\|_1=\int_{\R^n}\left|\Delta_\tau^{M}\left(\hat{E}(2^{\alpha_l}x,2^{-\alpha_l}\tau)(\eta(2^{-\alpha_i}|\tau|)-\eta(2^{-\alpha_{i-1}}|\tau|))\right)\right|d\tau\\
\le C2^{(n-2M)N+\m(N-\alpha_{i-1})},
\end{multline*}
for any $M\ge 1$.

Further, we define   $\sigma_{N,i}(x,y)=\F^{-1}(\zeta_{N,i}(x,\cdot)),\ i>l.$
Then, applying Lemma \ref{l:0} once again, we have
$\|\sigma_{N,i}\|_1\le C2^{\m(N-\alpha_{i-1})}.$   
Finally, applying Lemma \ref{l:2} and (\ref{eq:m}), we obtain
\begin{multline}\label{eq:newN}
\left|\int_{\R^n}E_{N}(x,y)u(y,t)dy\right|
\le \\
C\|u\|_{v,\infty}\left(v(2^{-\alpha_l})+2^{\m N}\sum_{i=l+1}^\infty  2^{-\m\alpha_{i-1}}v(2^{-\alpha_{i}})\right)\le C\|u\|_{v,\infty}A^l,
 \end{multline} 
 for any $x\in\R^n$ and $N\le \alpha_l$.
 Then (\ref{eq:wave}) follows by taking $N=\alpha_l$ and $N=\alpha_{l-1}$.
\end{proof}

\begin{corollary}\label{cor:1}
Let $\{\psi_{p,jk}\}$ be an orthogonal smooth wavelet basis as above. Then there exist $C$ such that for any $u\in h_v^\infty$
\beq\label{eq:coef1}
|c_{p,jk}(u(\cdot,t))|\le C2^{-nj/2}\|u\|_{v,\infty}v(2^{-j}),
\eeq
when $t>0$, $j\in\Z_+,\ k\in\Z^n$. 
\end{corollary}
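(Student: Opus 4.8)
The plan is to derive (\ref{eq:coef1}) from an $L^\infty$ bound on a single generation of the wavelet decomposition, which is essentially already contained in the proof of Theorem~\ref{th:1}. Write $D_j u(\cdot,t)=E_{j+1}u(\cdot,t)-E_j u(\cdot,t)$ for the generation-$j$ part of $u(\cdot,t)$, so that $D_j u(\cdot,t)=\sum_{p,k}c_{p,jk}(u(\cdot,t))\psi_{p,jk}$; the goal is to bound each coefficient by $\|D_j u(\cdot,t)\|_\infty$ times the $L^1$-norm of a single wavelet.

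First I would upgrade (\ref{eq:newN}) to a per-scale estimate: for every $N\ge0$,
\[
\|E_N u(\cdot,t)\|_\infty\le C\|u\|_{v,\infty}v(2^{-N}).
\]
Given $N\ge1$, let $l$ be the least index with $\alpha_l\ge N$; then $\alpha_{l-1}<N\le\alpha_l$, and since $v$ is decreasing and $v(2^{-\alpha_{l-1}})\in[A^{l-1},A^l)$ we get $v(2^{-N})\ge v(2^{-\alpha_{l-1}})\ge A^{l-1}$, while (\ref{eq:newN}) (which is valid for all $N\le\alpha_l$) gives $\|E_N u(\cdot,t)\|_\infty\le C\|u\|_{v,\infty}A^l\le CA\|u\|_{v,\infty}v(2^{-N})$; the case $N=0$ is immediate since $v(1)=1$. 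Subtracting the estimates for $N=j$ and $N=j+1$ and using the doubling bound $v(2^{-j-1})\le Dv(2^{-j})$ then gives
\[
\|D_j u(\cdot,t)\|_\infty\le C\|u\|_{v,\infty}v(2^{-j}).
\]

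Next I would observe that the coefficient depends only on this block. Since $\psi_{p,jk}\in V_{j+1}\ominus V_j$ we have $E_{j+1}\psi_{p,jk}=\psi_{p,jk}$ and $E_j\psi_{p,jk}=0$, hence $D_j\psi_{p,jk}=\psi_{p,jk}$; using the symmetry $\overline{D_j(y,x)}=D_j(x,y)$ of the kernel (Fubini being legitimate because $u(\cdot,t)\in L^\infty(\R^n)$ and $D_j(x,\cdot)$ decays rapidly),
\[
c_{p,jk}(u(\cdot,t))=\int_{\R^n}u(x,t)\overline{\psi_{p,jk}(x)}\,dx=\int_{\R^n}D_j u(x,t)\,\overline{\psi_{p,jk}(x)}\,dx,
\]
so $|c_{p,jk}(u(\cdot,t))|\le\|D_j u(\cdot,t)\|_\infty\|\psi_{p,jk}\|_1$. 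Finally $\|\psi_{p,jk}\|_1=2^{nj/2}\int_{\R^n}|\psi_p(2^j x-k)|\,dx=2^{-nj/2}\|\psi_p\|_1$, and combining this with the previous display yields (\ref{eq:coef1}) with $C$ depending only on $\phi$, the functions $\psi_p$, and $A$.

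The only step requiring any care is the first one — passing from the block constant $A^l$ appearing in (\ref{eq:newN}) to $v(2^{-N})$ uniformly over all scales $N$ inside the $l$-th block — and this is precisely where the definition of the sequence $\{\alpha_l\}$ and the doubling property of $v$ are used. Everything after that is the elementary remark that testing a bounded function against one rapidly decreasing wavelet costs exactly $\|\psi_{p,jk}\|_1=2^{-nj/2}\|\psi_p\|_1$, which is the normalization that appears in (\ref{eq:coef1}).
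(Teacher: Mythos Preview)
Your argument is correct and follows essentially the same approach as the paper: pair $u(\cdot,t)$ with $\psi_{p,jk}$, replace $u$ by its projection onto the relevant wavelet subspace, and then apply H\"older with the $L^1$-norm $\|\psi_{p,jk}\|_1=2^{-nj/2}\|\psi_p\|_1$. The only cosmetic difference is that the paper works directly with the block $g_l$ (for $j\in(\alpha_{l-1},\alpha_l]$) and invokes the bound (\ref{eq:wave}) from Theorem~\ref{th:1}, whereas you refine (\ref{eq:newN}) to a per-scale estimate on $E_Nu$ and then on $D_ju$; both routes rely on the same comparison of $A^l$ with $v(2^{-j})$ via the definition of the sequence $\{\alpha_l\}$.
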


\begin{proof}
Clearly, 
\[|(u(x,t),\psi_{p,jk}(x))|=|(g_l(x,t),\psi_{p,jk}(x))|\le\|g_l(x,t)\|_\infty\|\psi_{p,jk}\|_1,
\]
where $j\in(\alpha_{l-1},\alpha_l]$. Then (\ref{eq:wave}) implies (\ref{eq:coef1}). 
\end{proof}

%%%%%%%%%%%%%%%%%%%%%%%%%%%%%%%%%%%
\subsection{Converse estimates and coefficient characterization for weights of power growth}
The converse of Theorem \ref{th:1} is also true. 

\begin{theorem}\label{th:2}
Let $u$ be a harmonic function in $\R_{+}^{n+1}$ that is bounded on each half-space
$\{(x,t)\in\R^{n+1}, t\ge t_0>0\}$. Suppose that for each $t>0$
\[
u(x,t)=\sum_{l=0}^\infty g_l(x,t),\]
where the series converges uniformly on $\R^n$,  
$g_0(\cdot,t)\in V_{0}(\infty)$,
\[g_l(x,t)=\sum_{j=\alpha_{l-1}+1}^{\alpha_l}\sum_{p=1}^q\sum_{k\in\Z^n}a_p^{(jk)}(t)\psi_{p,jk}(x),\  l\ge 1\]
and there exists $B$ such that 
\[
\|g_l(\cdot,t)\|_\infty\le Bv(2^{-\alpha_l}),\]
for any $t>0$. Then $u\in h_v^\infty$ and $\|u\|_{v,\infty}\le CB$, where $C$ depends on $A$ and $\phi$ only.
\end{theorem}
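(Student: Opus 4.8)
The plan is to bound $|u(x,t)|$ for a fixed point $(x,t)\in\R^{n+1}_+$ by choosing the level $L$ so that $2^{-\alpha_L}\approx t$ (more precisely $\alpha_{L-1}<\log_2(1/t)\le\alpha_L$ when $t<1$, and $L=0$ when $t\ge1$), splitting the series $u=\sum_l g_l$ into the low-frequency part $\sum_{l\le L}$ and the high-frequency tail $\sum_{l>L}$, and estimating each part separately. The low part lives in $V_{\alpha_L}(\infty)$ and we can use its smoothness directly, while the high part has frequencies $\gtrsim 2^{\alpha_L}\gtrsim t^{-1}$ and should be killed by a Main-lemma-type argument that exploits harmonicity; this is the reverse of what was done in Theorem~\ref{th:1}.

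\emph{Low-frequency part.} For $l\le L$, since $g_l(\cdot,t)\in V_{\alpha_l}(\infty)\subset V_{\alpha_L}(\infty)$ and $\|g_l(\cdot,t)\|_\infty\le Bv(2^{-\alpha_l})$, I would simply sum: $\sum_{l=0}^{L}\|g_l(\cdot,t)\|_\infty\le B\sum_{l=0}^L v(2^{-\alpha_l})$. By construction $v(2^{-\alpha_l})<A^{l+1}$, so this is a geometric-type sum dominated by $C_A\,v(2^{-\alpha_L})\le C_A'\,v(t)$ using the doubling of $v$ and $2^{-\alpha_L}\asymp t$. This handles $|\sum_{l\le L} g_l(x,t)|\le CBv(t)$ with no harmonicity needed.

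\emph{High-frequency part.} Here is the main obstacle. For $l>L$ each $g_l(\cdot,t)$ is a combination of wavelets $\psi_{p,jk}$ with $j>\alpha_{l-1}\ge\alpha_L$, so $\widehat{g_l(\cdot,t)}$ is supported in $|\tau|\gtrsim 2^{\alpha_L}\gtrsim t^{-1}$ — i.e.\ it is supported \emph{away from the origin}, dually to Lemma~\ref{l:2}. I would insert a factor $\widehat{P_t}(\tau)^{-1}=e^{2\pi|\tau|t}$ on this annulus and absorb the complementary Gaussian-type factor into the harmonic function evaluated higher up. Concretely, mimic the proof of Lemma~\ref{l:2}/Lemma~\ref{l:1}: for each block $l>L$ build an $L^1$ function $\Sigma^{(l)}$ whose Fourier transform equals $e^{-2\pi|\tau|2^{-\alpha_{l-1}}}$ on the frequency support of $g_l$, write $g_l(x,t)=\big(g_l(\cdot,t)\ast(\text{something})\big)$ evaluated via $u(\cdot,t)$ so that convolving with $P_{2^{-\alpha_{l-1}}}$ raises the height to $t+2^{-\alpha_{l-1}}$, where $|u|\le Bv(2^{-\alpha_{l-1}})$ is already known from the block hypothesis (or, bootstrapping, from the partial sums); the $L^1$ norms of the auxiliary functions decay geometrically in $l$ because the frequency annulus recedes and $\hat E$ decays like $(1+|\tau|)^{-\m-n}$. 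Summing, $\big|\sum_{l>L} g_l(x,t)\big|\le CB\sum_{l>L}2^{\m(\alpha_L-\alpha_{l-1})}v(2^{-\alpha_l})$, and condition~\eqref{eq:m} forces this tail to be $\le CB\,v(2^{-\alpha_L})\le CB\,v(t)$.

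\emph{Conclusion.} Adding the two estimates gives $|u(x,t)|\le CB\,v(t)$ for all $(x,t)$ with $t<1$, and the case $t\ge1$ reduces to the low-frequency estimate with $L=0$ since $v\equiv1$ there; hence $u\in h_v^\infty$ with $\|u\|_{v,\infty}\le CB$. The delicate point throughout is the same Bourgain-type ``division by the Poisson kernel'' trick that powers the Main lemma, now applied blockwise on receding frequency annuli, together with the careful choice~\eqref{eq:m} of $\m$ that makes the high-frequency tail summable against the growth of $v$; the bookkeeping of which height $t+2^{-\alpha_{l-1}}$ one lands on, and checking that the known bound on $u$ there is exactly $v(2^{-\alpha_{l-1}})$ up to doubling constants, is where I expect the argument to require the most care.
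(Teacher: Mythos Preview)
Your low-frequency estimate is fine and matches the paper's. The gap is in the high-frequency tail. You propose to control $g_l(x,t)$ for $l>L$ by a Main-Lemma-type ``division by the Poisson kernel'' that lands you at height $t+2^{-\alpha_{l-1}}$, where you claim $|u|\le Bv(2^{-\alpha_{l-1}})$ is ``already known from the block hypothesis (or, bootstrapping, \ldots)''. It is not. The block hypothesis bounds only $\|g_l(\cdot,t)\|_\infty$, not $|u|$; and the Main Lemma (Lemma~\ref{l:2}) controls $\int u(\cdot,t)\sigma$ in terms of $\|u\|_{L^\infty(\R^{n+1}_\delta)}$ where $\delta^{-1}$ is the \emph{radius of the Fourier support} of $\sigma$. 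Since the kernel $E_{\alpha_l}(x,\cdot)-E_{\alpha_{l-1}}(x,\cdot)$ defining $g_l$ carries its Fourier mass at scale $\sim 2^{\alpha_l}$, applying the Main Lemma would require knowing $|u|$ at heights $\gtrsim 2^{-\alpha_l}$ --- \emph{smaller} than the height $t\sim 2^{-\alpha_L}$ you are trying to reach. A downward induction on $L$ therefore cannot get started, and the argument is circular.

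The paper sidesteps this by a different (and simpler) use of harmonicity: it does not bound $u(x,t)$ directly, but writes
\[
u(x,t+s)=(u(\cdot,t)\ast P_s)(x)=\sum_{l\ge 0}(g_l(\cdot,t)\ast P_s)(x)
\]
for fixed $s\in(0,1]$ and \emph{arbitrary} $t>0$, choosing $L$ so that $s\in[2^{-\alpha_{L+1}},2^{-\alpha_L})$. For $l\le L+1$ one uses $|g_l\ast P_s|\le\|g_l\|_\infty$ as you do. For $l>L+1$ the orthogonality $g_l(\cdot,t)\perp V_{\alpha_{l-1}}$ lets one subtract $\int E_{\alpha_{l-1}}(w,\cdot)P_s(x-w)\,dw$ for free, and Lemma~\ref{l:PE} (the $r$-smooth MRA approximation of the Poisson kernel) then gives
\[
|(g_l(\cdot,t)\ast P_s)(x)|\le CBv(2^{-\alpha_l})(2^{\alpha_{l-1}}s)^{-r}.
\]
Since $r>\m$, condition~\eqref{eq:m} makes the tail sum to $CBv(s)$, and writing any height as $t+s$ yields $|u|\le CBv$. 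No Main Lemma and no a priori control of $u$ are used; the decay for large $l$ comes from the fact that convolution with $P_s$ annihilates high frequencies, quantified through Lemma~\ref{l:PE}, and the only inputs are the assumed bounds on the individual blocks.
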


\begin{proof}
We fix $s\in (0,1]$ and take $L$ such that $s\in[2^{-\alpha_{L+1}},2^{-\alpha_{L}})$. Then
\begin{multline*}
u(x,t+s)=(u(\cdot,t)\ast P_s)(x)=\sum_{l=0}^\infty (g_l(\cdot,t)\ast P_s)(x)=\\
\sum_{l=0}^{L+1}(g_l(\cdot,t)\ast P_s)(x)+\sum_{l=L+2}^\infty(g_l(\cdot,t)\ast P_s)(x).
\end{multline*}
For each $l\le L+1$ we have 
\[|g_l(\cdot,t)\ast P_s|\le \|g_l(\cdot,t)\|_\infty\le Bv(2^{-\alpha_l}).\]
Since $(g_l(y,t),E_{\alpha_{l-1}}(x,y))=0$, for $l>L+1$ we get 
\begin{multline*}
(g_l(\cdot,t)\ast P_s)(x)=\\
\int_{\R^n}g_l(y,t)P_s(x-y)dy-\int_{\R^n}\int_{\R^n}g_l(y,t)E_{\alpha_{l-1}}(w,y)P_s(x-w)dwdy=\\
\int_{\R^n}g_l(y,t)\int_{\R^n}(P_s(x-y)-P_s(x-w))E_{\alpha_{l-1}}(w,y)dwdy.
\end{multline*}
Then by rescaling and applying Lemma \ref{l:PE}, we obtain
\[
|g_l(\cdot,t)\ast P_s|\le CBv(2^{-\alpha_{l}})(2^{\alpha_{l-1}}s)^{-r}.
\]
Now we remark that $2^{\alpha_{l-1}}s\ge 1$ and $r>\m$, where $\m$ is chosen such that (\ref{eq:m}) holds. Then  
\[
|g_l(\cdot,t)\ast P_s|\le CBA^2s^{-\m}v(2^{-\alpha_{l-1}})2^{-\m\alpha_{l-1}}.\]
Finally we add up the estimates and take into account (\ref{eq:m}) to get
\[
|u(x,t+s)|\le CBv(s),
\]
for any $t>0$. 
 
\end{proof}

When the weight $v$ grows sufficiently fast we can reformulate the result in terms of the wavelet coefficients. For general weights such characterization is not possible (see also Example below).
  
\begin{definition}
We say that a weight $v$ is of {\it{power-type growth}} if the doubling condition (\ref{eq:double}) is fulfilled and there exists $d$ such that the sequence $\alpha_j$ defined in \ref{ss:dec} satisfies $\alpha_{j+1}-\alpha_j\le d$ for any $j\ge 0$. 
\end{definition}

Typical examples of weights of power-type growth are $v(t)=t^{-a}$, $a>0$. Normal weights in the terminology of Shields and Williams, \cite{SW1}, are of power-type growth. When $v$ is a weight of power-type growth, harmonic functions in $h_{v}^\infty$ can be characterized by their wavelet {\it{coefficients}} if one combines Corollary \ref{cor:1} with the one below.

\begin{corollary} \label{cor:2}
Let $v$ be a weight of power-type growth, and let $u$ be harmonic in $\R^{n+1}_+$ and bounded  on each half-space
$\{(x,t)\in\R^{n+1}, t\ge t_0>0\}$. Suppose there exists $B$ such that
\beq\label{eq:coef2}
|b_k(u(\cdot,t))|\le B,\quad{\text{and}}\quad
|c_{p,jk}(u(\cdot,t))|\le 2^{-nj/2}Bv(2^{-j}),
\eeq
for any $t>0$, $j\in\Z_+,\ k\in\Z^n$. Then $u\in h_{v}^\infty$ and $\|u\|_{v,\infty}\le CB$, where $C$ does not depend on $u$.
\end{corollary}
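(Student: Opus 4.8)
The plan is to reduce everything to Theorem \ref{th:2} applied to the block decomposition of $u$ generated by $u$'s own wavelet coefficients. Fix $t>0$. Since $u$ is bounded on the half-space $\{(x,t'):t'\ge t/2\}$, the slice $u(\cdot,t)=P_{t/2}\ast u(\cdot,t/2)$ is a bounded, Lipschitz (hence uniformly continuous) function on $\R^n$. For an $r$-regular multiresolution approximation this guarantees $E_{\alpha_l}u(\cdot,t)\to u(\cdot,t)$ uniformly as $l\to\infty$, so (telescoping $E_{\alpha_l}-E_{\alpha_{l-1}}$ exactly as in the proof of Theorem \ref{th:1})
\[
u(x,t)=g_0(x,t)+\sum_{l\ge 1}g_l(x,t),\qquad g_0(x,t)=\sum_{k\in\Z^n}b_k(u(\cdot,t))\,\phi(x-k),
\]
\[
g_l(x,t)=\sum_{j=\alpha_{l-1}+1}^{\alpha_l}\sum_{p=1}^q\sum_{k\in\Z^n}c_{p,jk}(u(\cdot,t))\,\psi_{p,jk}(x),\qquad l\ge 1,
\]
with uniform convergence on $\R^n$. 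By the first inequality in (\ref{eq:coef2}) the sequence $\{b_k(u(\cdot,t))\}_k$ lies in $l^\infty$, so $g_0(\cdot,t)\in V_0(\infty)$. Thus $u$ is exactly of the form required by Theorem \ref{th:2}, and it remains only to establish a bound $\|g_l(\cdot,t)\|_\infty\le CBv(2^{-\alpha_l})$ uniform in $t$ and $l$.

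For $g_0$, the rapid decay of $\phi$ gives $\sum_k|\phi(x-k)|\le C(\phi)$ uniformly in $x$, so $\|g_0(\cdot,t)\|_\infty\le C(\phi)B=C(\phi)Bv(2^{-\alpha_0})$. For $l\ge 1$ I estimate each generation separately. Because $\psi_p$ decays rapidly, $\sum_k|\psi_p(y-k)|\le C(\psi_p)$ uniformly in $y$, and therefore, using the second inequality in (\ref{eq:coef2}),
\[
\Bigl|\sum_{k\in\Z^n}c_{p,jk}(u(\cdot,t))\,\psi_{p,jk}(x)\Bigr|\le\sup_k|c_{p,jk}(u(\cdot,t))|\cdot 2^{nj/2}\sum_k|\psi_p(2^jx-k)|\le C(\psi_p)\,B\,v(2^{-j}).
\]
Summing over $p=1,\dots,q$ and over the generations $j\in(\alpha_{l-1},\alpha_l]$, and using that $v$ is decreasing so that $v(2^{-j})\le v(2^{-\alpha_l})$ whenever $j\le\alpha_l$, we obtain $\|g_l(\cdot,t)\|_\infty\le q\,(\alpha_l-\alpha_{l-1})\,C\,B\,v(2^{-\alpha_l})$.

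This is the only point where the power-type growth hypothesis is used: it provides $\alpha_l-\alpha_{l-1}\le d$, hence $\|g_l(\cdot,t)\|_\infty\le qdC\,B\,v(2^{-\alpha_l})$ for all $l\ge 0$ and all $t>0$. Taking $B'=\max\{C(\phi),\,qdC\}B$, all hypotheses of Theorem \ref{th:2} are satisfied, and that theorem yields $u\in h_v^\infty$ with $\|u\|_{v,\infty}\le CB'$, i.e. $\|u\|_{v,\infty}\le C''B$ with $C''$ independent of $u$. I do not expect any real obstacle here: the argument is a crude supremum estimate on each dyadic block followed by an appeal to Theorem \ref{th:2}. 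The only subtlety is that the generation-by-generation bound loses a factor equal to the block length $\alpha_l-\alpha_{l-1}$; this is harmless precisely for power-type weights, but would be fatal for a general doubling weight, which is consistent with the Example showing that a coefficient characterization cannot hold in that generality.
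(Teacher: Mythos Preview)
Your proof is correct and follows essentially the same route as the paper's own argument: bound each single generation $j$ by pairing the coefficient estimate (\ref{eq:coef2}) with the uniform bound $\sum_k|\psi_{p,jk}(x)|\le C2^{nj/2}$, then invoke the power-type hypothesis $\alpha_l-\alpha_{l-1}\le d$ to sum over the block, and finally apply Theorem~\ref{th:2}. You are simply more explicit than the paper about verifying the auxiliary hypotheses of Theorem~\ref{th:2} (uniform convergence of the block series, $g_0(\cdot,t)\in V_0(\infty)$), which the paper takes for granted.
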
 

\begin{proof}
Let $g_l$ be defined as in Theorem \ref{th:1}. We want to show that $\|g_l(x,t)\|\le Bv(2^{-\alpha_l})$. Since we have only finitely many dyadic generations between $\alpha_{l-1}$ and $\alpha_l$ it suffices to estimate 
\[
\sum_{k\in \Z^n}(u(y,t),\psi_{p,jk}(y))\psi_{p,jk}(x)\]
for each $j$. Applying (\ref{eq:coef2}) and the inequality (see \cite[ch 3.1]{Mey})
\[
\max_x\sum_ {k\in \Z^n}|\psi_{p,jk}(x)|\le C2^{nj/2},\]
we get the required estimate.
\end{proof}
%%%%%%%%%%%%%%%%%%%%%%%%%%%%%%%%%%%%%%%%%

\subsection{Wavelet characterization}
In this subsection we prove Theorem \ref{th:intr} formulated in the introduction. It follows readily from the proofs of Theorem \ref{th:1} and \ref{th:2}, we provide an argument below for the sake of completeness. We choose to reformulate the statement using blocks of wavelet decomposition since it is more convenient for the application we will give in the next section. 

Suppose that $u\in h_v^\infty$ and $\alpha_{l-1}\le N< \alpha_{l}$. We have
\[
s_N(u(x,t))=\int_{\R^n}E_N(x,y)u(y,t)dy,\]
and $v(2^{-N})\ge cA^l$.
Then, applying (\ref{eq:newN}) for this $l$, we obtain $|s_N(u(x,t))|\le C\|u\|_{v,\infty}v(2^{-N})$. If in addition $u\in h_v^0$, then by Corollary \ref{cor:0} we have 
\[
M_N(u)=\sup_t|s_N(u(x,t))|=o(v(2^{-N})),\quad N\rightarrow\infty.\]

To prove the converse, assume that $M_N(u)\le \varepsilon_l v(2^{-N})$ when $N\ge \alpha_{l-1}$. Then clearly
\[|g_l(x,t)|=|s_{\alpha_l}(x,t)-s_{\alpha_{l-1}}(x,t)|\le 2\varepsilon_l v(2^{-\alpha_l}).\]
Theorem \ref{th:2} implies that $u\in h_v^\infty$ when $\varepsilon_l$ are bounded.  When $\varepsilon_l\rightarrow 0$ as $l\rightarrow\infty$, we get
\[
|g_l(\cdot,t)\ast P_s|\le 2\varepsilon_l v(2^{-\alpha_l})\]
for any $t>0$. Moreover, as in the proof of Theorem \ref{th:2},
\[
|g_l(\cdot,t)\ast P_s|\le C\varepsilon_lv(2^{-\alpha_{l}})(2^{\alpha_{l-1}}s)^{-r}.\]
Then we choose $L$ such that $s\in[2^{-\alpha_{L+1}}, 2^{-\alpha_L})$ and write
\begin{multline*}
|u(x,t+s)|\le \sum_{l=0}^{L+1}2\varepsilon_lv(2^{-\alpha_l})+C\varepsilon_L\sum_{l=L+2}^\infty
v(2^{-\alpha_{l}})(2^{\alpha_{l-1}}s)^{-m}\le
\\
\sum_{l=0}^{L+1}2\varepsilon_lv(2^{-\alpha_l})+C_1\varepsilon_L(2^{\alpha_{L+1}}s)^{-m}v(2^{-\alpha_{L+1}})\le c_Lv(s),
\end{multline*}
where $c_L$ goes to zero as $L$ goes to infinity. This concludes the proof of Theorem \ref{th:intr}. 

We will also reformulate the result in terms of the boundary values of harmonic functions. Let $h^{-a}=h^\infty_{t^{-a}}$ for $a>0$ and $h^{-\infty}=\cup_a h^{-a}$. The doubling condition on the weight $v$ implies that $h_v^\infty\subset h^{-a}$ for some $a>0$. Harmonic functions in $h^{-\infty}$ admit boundary values in the sense of distributions of finite order, see for example \cite{S}. Thus when we take sufficiently smooth multiresolution approximation and choose compactly supported wavelets, we can define the wavelet coefficients of the boundary values of $u\in h^\infty_v$. Then we can reformulate the main result in the following way.

Suppose that $u\in h^{-a}$ and let $U$ be the boundary values of $u$ in the sense of distributions. Let further $b_k(U)$ and $c_{p,jk}(U)$ be the wavelet coefficients of $U$ with respect to a sufficiently smooth compactly supported wavelet basis. We define 
\[
s_N(U)(x)=\sum_{p=1}^q\sum_{j=0}^{N} \sum_{k\in\Z^n}c_{p,jk}(U)\psi_{p,jk}(x)+\sum_{k\in\Z^n} b_k(U)\phi(x-k).\]

\begin{corollary} \label{c:3}
Let $h_v^\infty\subset h^{-a}$ and let $u\in h^{-a}$. Then $u\in h_v^\infty$ if and only if there exists $K>0$ such that
\[\|s_N(U)\|_\infty\le Kv(2^{-N})\]
for any $N$.
\end{corollary}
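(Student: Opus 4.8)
The plan is to deduce Corollary \ref{c:3} from Theorem \ref{th:intr} (in the reformulated block-version just proved) by showing that the two partial sums $s_N(u(\cdot,t))$ and $s_N(U)$ are close, uniformly in $t$, and that the latter controls the former in the limit. The starting observation is that since $h_v^\infty\subset h^{-a}$, every $u\in h_v^\infty$ has boundary values $U$ as a distribution of finite order, and because we have chosen a sufficiently smooth, compactly supported wavelet basis, the pairings $b_k(U)=\langle U,\phi(\cdot-k)\rangle$ and $c_{p,jk}(U)=\langle U,\psi_{p,jk}\rangle$ are well defined. The key continuity fact I would invoke is that $u(\cdot,t)\to U$ in the relevant distribution topology as $t\to0^+$, which is exactly the statement that Poisson integration recovers the boundary distribution; hence for each fixed test function $\phi(\cdot-k)$ or $\psi_{p,jk}$ one has $b_k(u(\cdot,t))\to b_k(U)$ and $c_{p,jk}(u(\cdot,t))\to c_{p,jk}(U)$ as $t\to0^+$. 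Since $s_N(\cdot)$ is a \emph{finite} linear combination of these coefficients against fixed compactly supported functions, it follows that $s_N(u(\cdot,t))\to s_N(U)$ pointwise (indeed uniformly on $\R^n$, the finite sum being of compactly-overlapping bumps) as $t\to0^+$.

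With that in hand the argument splits into the two implications. For the ``only if'' direction: if $u\in h_v^\infty$, then Theorem \ref{th:intr} gives $\sup_{t>0}\|s_N(u(\cdot,t))\|_\infty\le Kv(2^{-N})$; letting $t\to0^+$ and using the convergence above yields $\|s_N(U)\|_\infty\le Kv(2^{-N})$ for every $N$. For the ``if'' direction: suppose $\|s_N(U)\|_\infty\le Kv(2^{-N})$ for all $N$. I would show that this forces $\sup_{t>0}\|s_N(u(\cdot,t))\|_\infty\le C K v(2^{-N})$, after which Theorem \ref{th:intr} finishes the job. The natural route is to note that $s_N(u(\cdot,t)) = s_N(U)\ast P_t$ in a suitable sense — more precisely, $s_N(u(\cdot,t))(x)=\int E_N(x,y)u(y,t)\,dy$ and $u(\cdot,t)=U\ast P_t$, so that $s_N(u(\cdot,t)) = (s_N U)\ast P_t$ provided $E_N$ commutes with convolution by $P_t$ on this class; since $s_NU\in V_N(\infty)$ and $\|P_t\|_1=1$, convolution with $P_t$ does not increase the $L^\infty$ norm, giving $\|s_N(u(\cdot,t))\|_\infty\le\|s_NU\|_\infty\le Kv(2^{-N})$ uniformly in $t$.

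The main obstacle I anticipate is making the identity $s_N(u(\cdot,t))=(s_NU)\ast P_t$ rigorous for a boundary \emph{distribution} $U$ of finite order rather than a function, and justifying the interchange of the (infinite) sum defining $s_NU$ with convolution by $P_t$ and with the pairing against $U$. Two points need care here: first, that $s_N$ applied to $u(\cdot,t)$ really equals the $E_N$-projection $\int E_N(x,y)u(y,t)\,dy$ — this is just the definition of the partial sum together with $E_N(x,y)=\sum_k\phi_{N,k}(x)\overline{\phi_{N,k}(y)}$ expanded through scale $N$, which is legitimate because $\phi$ and the $\psi_p$ are compactly supported so only finitely many terms contribute at each $x$; second, that $\langle U\ast P_t,\,\overline{\psi_{p,jk}}\rangle=\langle U,\,\overline{\psi_{p,jk}}\ast \tilde P_t\rangle$ and that $\overline{\psi_{p,jk}}\ast P_t\to\overline{\psi_{p,jk}}$ in the space of test functions dual to distributions of order $\le$ (order of $U$), which holds since $P_t$ is an approximate identity and $\psi_{p,jk}\in C^r$ with $r$ large. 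Once these standard-but-delicate points are settled, the rest is a direct application of the already-proved Theorem \ref{th:intr}; no new estimates on $v$ or on the multiresolution kernels are needed.
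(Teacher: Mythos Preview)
Your forward implication is exactly the paper's: pass to the limit $t\to0^+$ in the coefficients and invoke Theorem~\ref{th:intr}.

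The converse, however, has a genuine gap. The identity $s_N(u(\cdot,t))=(s_NU)\ast P_t$ that you want is not a matter of making an interchange rigorous---it is simply false for the MRA in use. The projection kernel $E_N(x,y)=\sum_k\phi_{N,k}(x)\overline{\phi_{N,k}(y)}$ with compactly supported $\phi$ is not a function of $x-y$; it is invariant only under translations in $2^{-N}\Z^n$, so $E_N$ does \emph{not} commute with convolution by $P_t$. Concretely,
\[
s_N(U\ast P_t)(x)=\int U(w)\Bigl(\int E_N(x,y)P_t(y-w)\,dy\Bigr)dw,
\qquad
(s_NU)\ast P_t(x)=\int U(w)\Bigl(\int E_N(z,w)P_t(x-z)\,dz\Bigr)dw,
\]
and the two inner integrals differ. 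Hence you cannot simply dominate $\sup_t\|s_N(u(\cdot,t))\|_\infty$ by $\|s_NU\|_\infty$ and feed the conclusion to Theorem~\ref{th:intr}.

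The paper circumvents this by a compactness argument. It sets $u_N(\cdot,t)=s_N(U)\ast P_t$, a harmonic function with bounded boundary data $s_N(U)$. Writing $s_N(U)$ as a finite sum of blocks $g_l=s_{\alpha_l}(U)-s_{\alpha_{l-1}}(U)$, each bounded by $2Kv(2^{-\alpha_l})$ and orthogonal to $V_{\alpha_{l-1}}$, the estimates in the proof of Theorem~\ref{th:2} (applied to these $t$-independent blocks) give $\|u_N\|_{v,\infty}\le CK$ uniformly in $N$. The $u_N$ then form a normal family; a subsequential limit $u_0\in h_v^\infty$ has the same wavelet coefficients as $U$, hence the same boundary distribution, and boundedness for $t\ge1$ forces $u_0=u$. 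No commutation of $E_N$ with Poisson convolution is needed.
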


\begin{proof}
Clearly, $c_{p,jk}(U)=\lim_{t\rightarrow 0} c_{p,jk}(u(\cdot,t))$ and then
\[
s_N(U)(x)=\lim_{t\rightarrow 0} s_N(u(\cdot,t))(x),\quad x\in\R^n.\]
Thus if $u\in h_v^\infty$ the required estimate holds.

We want to prove the converse. Consider the sequence $u_N(\cdot,y)=s_N(U)\ast P_y$ of harmonic functions in the upper half-space. By repeating the estimates from the proof of Theorem \ref{th:2}, we conclude that $u_N\in h_v^\infty$ and $\|u_N\|_{v,\infty}\le CK.$ Thus $\{u_N\}$ form a normal family in the upper half-space and we can choose a convergent subsequence $\{u_{N_j}\}$ that converges to $u_0\in h_v^\infty$. Further, let $U_0$ be the boundary values of $u_0$. We have $c_{p,jk}(U_0)=c_{p,jk}(U)$ and $b_k(U_0)=b_k(U)$. This implies $U_0=U$ and since $u_0$ and $u$ are bounded in $\{(x,t), t\ge 1\}$ we conclude that $u=u_0\in h_v^\infty$.  
\end{proof}  

In the same way Corollary \ref{cor:2} can be reformulated on the level of boundary values. We show that the corresponding result does not hold for weights that are not of power-type growth.

\begin{example} Assume that $v$ is not of power-type growth but satisfies the doubling condition. Then $\sup_l|\alpha_{l+1}-\alpha_l|=\infty$. We will give an example of a function $u\in h^{-a}$ for some $a>0$ with boundary values $U$ that satisfy
\[|c_{p,jk}(U)|\le 2^{-nj/2}v(2^{-j})\]
and $u\not\in h_v^\infty$.

Let $\psi=\psi_1$ be one of the wavelet-functions, we do not use the others so we omit the index. There exists a cube $Q_1$ in $\R^n$ where $\psi>a>0$  and for some $j$ large enough and an appropriate $k\in \Z^n$ we have $\supp\,\psi_{jk}\subset Q_1$. Iterating this construction we can find a sequence of cubes $Q_1\supset Q_2\supset...\supset Q_l\supset...$ and a sequence $k_1,...,k_l,...\in\Z^n$  such that $\supp\psi_{lj,k_l}\subset Q_l$ and $\psi_{lj,k_l}>2^{nlj/2}a$  on $Q_{l+1}$. 

Further, there exists a sequence $l_d$ such that $\alpha_{l_d+1}>\alpha_{l_d}+j(d+1)$. Then we have $\alpha_{l_d}\in[j(s_d-1), js_d)$ for some integer $s_d\ge 1$,  and we define 
\[
\nu_{d}(x)=\sum_{s=s_d}^{s_d+d} 2^{-n(sj)/2}v(2^{-js_d})\psi_{sj, k_s}(x).\]
Clearly, 
\[
\|\nu_{d}\|_\infty\ge ad\,v(2^{-js_d}).\]
Finally, let $U=\sum_d \nu_d$ then $u(\cdot,y)=U\ast P_y$ is a harmonic function in $h^{-\infty}$ but $u\not\in h_v^\infty$.  
\end{example}

%%%%%%%%%%%%%%%%%%%%%%%
\subsection{Isomorphism classes of growth spaces}
We conclude this section by one more corollary. 

\begin{corollary} Let $v$ satisfy (\ref{eq:double}). Then the Banach space $h_v^\infty$ is isomorphic to $l^\infty$.
\end{corollary}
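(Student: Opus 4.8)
The plan is to exhibit a bounded linear bijection between $h_v^\infty$ and $\ell^\infty$ using the wavelet block decomposition established in this section. The candidate map is the one that sends a function $u\in h_v^\infty$ to the sequence of its normalized wavelet block data: for each block index $l\ge 1$ and each wavelet index $(p,j,k)$ with $\alpha_{l-1}<j\le\alpha_l$, record the number $c_{p,jk}(U)/(2^{-nj/2}v(2^{-j}))$ (where $U$ denotes the boundary distribution of $u$, which makes sense since $h_v^\infty\subset h^{-a}$), together with the numbers $b_k(U)$ for the coarse part. The first step is to check that this map lands in $\ell^\infty$: Corollary~\ref{cor:1} gives exactly $|c_{p,jk}(U)|\le C\|u\|_{v,\infty}2^{-nj/2}v(2^{-j})$, and the coarse coefficients are bounded similarly, so the map is bounded with norm controlled by a constant times $\|u\|_{v,\infty}$.

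The second step is to show the map is bounded below and surjective. Given any bounded sequence of data $\{b_k\}\cup\{c_{p,jk}\}$ with $|c_{p,jk}|\le B2^{-nj/2}v(2^{-j})$ and $|b_k|\le B$, one forms the distribution $U=\sum_k b_k\phi(\cdot-k)+\sum_{p,j,k}c_{p,jk}\psi_{p,jk}$ and the harmonic extension $u(\cdot,y)=U\ast P_y$. One then groups the wavelet terms into the blocks $g_l$ adjusted to $v$ as in Theorem~\ref{th:1}; using the pointwise bound $\max_x\sum_k|\psi_{p,jk}(x)|\le C2^{nj/2}$ from \cite[ch 3.1]{Mey} (exactly as in the proof of Corollary~\ref{cor:2}, but now the bound is applied generation-by-generation within each block, and one must be careful because a block may contain many generations when $v$ is not of power-type growth), one gets $\|g_l(\cdot,t)\|_\infty\le C B\,(\alpha_l-\alpha_{l-1})\,v(2^{-\alpha_l})$. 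This extra factor $\alpha_l-\alpha_{l-1}$ is the crux of the difficulty and is dealt with exactly as in the proof of Theorem~\ref{th:2}: one does not estimate $\|g_l\|_\infty$ crudely but instead estimates $\|g_l(\cdot,t)\ast P_s\|_\infty$, splitting generations inside the block against the scale $s$ and using Lemma~\ref{l:PE} together with the gain $2^{-\varkappa\alpha_{l-1}}$ from \eqref{eq:m}; the geometric decay absorbs the polynomial block-length factor. This shows $u\in h_v^\infty$ with $\|u\|_{v,\infty}\le CB$. Finally, since the wavelet coefficients of $U$ recover the original data, the map is injective and the inverse is bounded, hence an isomorphism of Banach spaces.

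The remaining point is that the index set of the wavelet/scaling data is countably infinite, so the target sequence space is literally $\ell^\infty(\mathcal{S})$ for a countable set $\mathcal{S}$, which is isometrically $\ell^\infty=\ell^\infty(\N)$ after any bijection $\mathcal{S}\to\N$. Putting these together, $h_v^\infty$ is isomorphic to $\ell^\infty$.

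I expect the main obstacle to be the lower bound / surjectivity direction, specifically handling blocks of unbounded length: the naive triangle-inequality estimate on $\|g_l\|_\infty$ loses a factor $\alpha_l-\alpha_{l-1}$ which is not uniformly bounded for general doubling weights, and one must instead run the Poisson-kernel splitting argument of Theorem~\ref{th:2} to recover the correct estimate. Once that is in place the rest is a routine packaging of Corollary~\ref{cor:1} and Theorem~\ref{th:2} into the language of Banach space isomorphisms.
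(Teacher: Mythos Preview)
Your forward map is fine, but the surjectivity/bounded-below step has a genuine gap. For weights that are not of power-type growth the claim ``given coefficients with $|c_{p,jk}|\le B\,2^{-nj/2}v(2^{-j})$ one gets $u\in h_v^\infty$ with $\|u\|_{v,\infty}\le CB$'' is simply false: the paper's Example right after Corollary~\ref{cor:2} constructs exactly such a $U$ with $u\notin h_v^\infty$. The hand-waved rescue (``split generations inside the block against the scale $s$; geometric decay absorbs the block-length factor'') cannot work. If $s\in[2^{-\alpha_{L+1}},2^{-\alpha_L})$, then for every generation $j\le\alpha_L$ one has $2^j\le s^{-1}$, so convolution with $P_s$ gives no gain on any of those generations; within the single block $(\alpha_{L-1},\alpha_L]$ you are left with the triangle inequality over $\alpha_L-\alpha_{L-1}$ terms, each of size $\approx v(2^{-\alpha_L})$, and that factor is unbounded when $v$ is not of power-type. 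The Example makes this concrete by nesting $d$ wavelets of increasing scale inside one long block so that their sum has $L^\infty$ norm $\gtrsim d\,v(2^{-\alpha_{l_d}})$.

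The paper's argument avoids this obstruction by \emph{not} trying to make the coefficient map onto. It uses the father-wavelet sampling map $S(u)=\{(U,\phi_{jk})\}_{j\in\A,\,k\in\Z^n}$ at the selected levels $j\in\A=\{\alpha_l\}$, identifies $h_v^\infty$ with the (proper) range $\ell=S(h_v^\infty)\subset l_w^\infty$, and then builds an explicit bounded projection $l_w^\infty\to\ell$. The isomorphism $h_v^\infty\simeq l^\infty$ then follows from the abstract fact (Pe\l czy\'nski decomposition; see \cite{LT}) that every infinite-dimensional complemented subspace of $l^\infty$ is isomorphic to $l^\infty$. In short, for power-type weights your direct approach does work (this is precisely the remark before the paper's proof), but for general doubling weights one needs the complemented-subspace detour.
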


As we mentioned in the introduction the result is known for harmonic functions in the unit disk, see \cite{L0,L00}. We suggest a new approach.  For the case of power-type growth weight the isomorphisms are straightforward, consider $T:h_v^\infty\rightarrow l_w^\infty$ given by $T(u)=(\{c_{p,jk}(U)\},\{b_k(U)\})$, where $U$ is the boundary values of $u$ in the sense of distributions. We have 
\[
l_w^\infty=\{(\{a_{p,jk}\},\{A_k\}): \max(\sup_{p,jk}|a_{p,jk}|2^{nj/2}(v(2^{-j}))^{-1}, \sup_k A_k)<+\infty\}\sim l^\infty.\]
The same operator gives isomorphism $h_v^0\rightarrow c_{w,0}$.

\begin{proof} For general weights that are not of power-type growth the proof contains two steps. First we construct an isomorphism between $h_v^\infty$  and a subspace $\ell$ of a weighted space $l_w^\infty$, then we show that $\ell$ is a complemented subspace in $l_w^\infty$.  It remains to refer to the theorem that every infinite-dimensional complemented subspace of $l^\infty$ is isomorphic to $l^\infty$, see \cite{LT}. The last step is not constructive, it exploits the Pe\l czy\'{n}ski decomposition method.

To define the isomorphism we consider the "father wavelet" $\phi$ and its dilations $\phi_{jk}(y)=2^{nj}\phi(2^{j}y-k)$, note that we choose the factor to preserve the $L^1$-norm and the integral of the function $\phi$. We assume as above that $\phi$ is smooth enough and has compact support. Now let $\A=\{\alpha_l\}_{l=0}^\infty$, we define $S: h_v^\infty\rightarrow l_w^\infty$,
\[
S(u)=\{(U, \phi_{jk})\}_{j\in\A, k\in\Z^n}
\quad {\text{and}}\quad 
\|\{a_{jk}\}\|_{\infty,w}=\sup_{jk} |a_{jk}|(v(2^{-j}))^{-1}.\]

Our results imply that $S$ is a bounded injective operator. We will describe $\ell=S(h_v^\infty)$. First, note that there are some natural connections between the coefficients $(U,\phi_{jk})$. For any $j$ we have $\phi=\phi_{00}\in V_{0}\subset V_{j}$ and 
\begin{equation}\label{eq:phirec}
\phi_{00}(y)=\sum_{m\in\Z^n} \gamma(j,m)\phi_{jm}(y).
\end{equation} 
Computing $L^2$-norms we obtain
\begin{equation}\label{eq:l2}
\sum_m\gamma(j,m)^2=2^{-nj}.
\end{equation} 
We have also $\phi_{\alpha k}=\sum_{m}\gamma(j,m-2^jk)\phi_{j+\alpha,m}$ and since $\{\phi_{jm}\}$ is a basis in $V_j$, we have
\[
\sum_m\gamma(j,m-2^j k)\gamma(i,\mu-2^i m)=\gamma(i+j,\mu-2^{i+j}k).\]

Moreover, the orthogonality of $\phi_{\alpha k}$ and $\phi_{\alpha \kappa}$ gives 
\[
\sum_m\gamma(j, m-2^jk)\gamma(j, m-2^j\kappa)=0,\quad k\neq \kappa.\] 
Finally, the last three identities imply
\[
\sum_\mu\gamma(i,\mu-2^im)\gamma(i+j,\mu-2^{i+j}k)=2^{-ni}\gamma(j, m-2^jk).\]

Now we define
\begin{multline*}
\ell=\{\{a_{jk}\}_{j\in \A, k\in\Z^n}\in l_w^\infty: a_{jk}=\sum_{m\in\Z^n}\gamma(j'-j,m-2^{j'-j}k) a_{j' m},\\
{\text{ for all}}\ j,j'\in\A, j<j', k\in\Z^n\}.
\end{multline*}
To be more precise we could right $l_w^\infty(\A\times\Z^n)$, meaning that we consider the space of all bounded functions of the countable set $\A\times\Z^n$,  but we avoid this complication of notation and hope it does not cause misinterpretation.

It is not difficult to prove that for $\{a_{jk}\}\in\ell$ the following limit 
\[
\lim_{j\rightarrow\infty}\sum_k 2^{-nj} a_{jk}\phi_{jk}\]
exists in the sense of distributions and the limit is the boundary values of a function in $h_v^\infty$ (see the proof of Corollary \ref{c:3} above). This gives the inverse of $S$ restricted to $\ell$. 

To complete the proof we give a bounded projection from $l_w^\infty$ to $\ell$. Let $\{a_{jk}\}\in l_w^\infty$, for each $j\in\A$ and $k\in\Z^n$ we define 
\[\delta_{jk}=a_{jk}-\sum_{m\in\Z^n}\gamma(j'-j,m-2^{j'-j}k) a_{j' m},\]
where $j'$ is the next element in $\A$ after $j$.
%Clearly, $|\delta_{jk}|\le \|a\|_{l_w^\infty}v(2^{-j'})$, 
Let further
\[\tilde{a}_{jk}=a_{jk}+\sum_{i<j, i\in\A}2^{n(j-i)}\sum_{m\in \Z^n}\gamma(j-i,k-2^{j-i}m)\delta_{i m}.\]
Straightforward but tedious calculations show that $\{a_{jk}\}\mapsto\{\tilde{a}_{jk}\}$ is a projection on $\ell$. To show that it is bounded we note that the number of non-zero terms in (\ref{eq:phirec}) does not exceed $C2^{nj}$ and use the estimate (\ref{eq:l2}). It implies that $\|\{\delta_{jk}\}\|_{\infty,w}\le C\|\{a_{jk}\}\|_{\infty,w}$ and further by (\ref{eq:m}) we get $\|\{\tilde{a}_{jk}\}\|_{\infty,w}\le C\|\{a_{jk}\}\|_{\infty,w}$. This completes the proof of  the corollary.

\end{proof}
%%%%%%%%%%%%%%%%%%%%%%%%%%%%%%%%%%%%%%%%%%%%%%%%%%%%%%%%%%%%%%%%%%
%%%%%%%%%%%%%%%%%%%%%%%%%%%%%%%%%%%%%%%%%%%%%%%%%%%%%%%%%%%%%%

\section{Oscillation along vertical lines}\label{s:osc}
\subsection{An auxiliary estimate} Let $u\in h_{v}^\infty$, we decompose it into wavelet blocks like in Theorem \ref{th:1} and prove one more inequality for the blocks $g_l(x,t)$ when $t>2^{-\alpha_l}$. It shows how the smoothness of the multiresolution analysis affects the error of the approximation. 

\begin{proposition}
Let $u\in h_v^\infty(\R^{n+1}_+)$ and let $g_l$ be defined as in Theorem \ref{th:1} and let $m$ be a positive integer, $m\le r$. Then there exists $C=C_m$ such that 
\beq\label{eq:waved}
\|g_l(\cdot,t)\|_\infty\le C\|u\|_{v,\infty}v(2^{-\alpha_l})(2^{\alpha_{l-1}}t)^{-m},\quad l\ge 0,
\eeq
for any $t>2^{-\alpha_{l-1}}$.
\end{proposition}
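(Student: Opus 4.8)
The plan is to combine the exact structure used in the proof of Theorem~\ref{th:1} with the decay estimate from Lemma~\ref{l:DP}, which is precisely the analogue of Lemma~\ref{l:PE} adapted to the situation $t > 2^{-\alpha_{l-1}}$. Recall from the proof of Theorem~\ref{th:1} that
\[
g_l(x,t)=\int_{\R^n}\bigl(E_{\alpha_l}(x,y)-E_{\alpha_{l-1}}(x,y)\bigr)u(y,t)\,dy,
\]
and that $g_l(\cdot,t)\in V_{\alpha_l}(\infty)$. The point is that when $t$ is large compared with the finest scale $2^{-\alpha_{l-1}}$ entering the block, convolution with $P_t$ has already "smoothed out" the high-frequency content that $E_{\alpha_l}-E_{\alpha_{l-1}}$ picks out, so one should first write $u(\cdot,t)$ as a Poisson extension from a lower level and then exploit the vanishing moments encoded in \eqref{eq:smoothMRA}.

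Concretely, first I would fix $t>2^{-\alpha_{l-1}}$ and write $u(x,t) = (u(\cdot,t/2)\ast P_{t/2})(x)$ (or extend from any level below $2^{-\alpha_{l-1}}$; the doubling of $v$ makes the choice harmless). Substituting this into the formula for $g_l$ and using that $E_{\alpha_l}(x,\cdot)-E_{\alpha_{l-1}}(x,\cdot)$ annihilates polynomials of degree $<r$ in the relevant sense — equivalently, applying the Lemma~\ref{l:DP} mechanism with $J$ replaced by the difference $\alpha_l-\alpha_{l-1}$ of levels and $s=t/2$ — one gets
\[
\Bigl|\int_{\R^n}\bigl(E_{\alpha_l}(x,y)-E_{\alpha_{l-1}}(x,y)\bigr)(u(\cdot,t/2)\ast P_{t/2})(y)\,dy\Bigr|
\le C\,\|u(\cdot,t/2)\|_\infty\,\bigl(2^{\alpha_{l-1}}\,(t/2)\bigr)^{-r}.
\]
Here the key identity is that, after Fubini, the inner $dy$-integral is exactly $\int (E_{\alpha_l}(x,y)-E_{\alpha_{l-1}}(x,y))P_{t/2}(y-w)\,dy$, which Lemma~\ref{l:DP} bounds (after rescaling, noting that $D_j$ at level $j\ge\alpha_{l-1}$ contributes $2^{-rj}$ and the geometric series sums to a multiple of $2^{-r\alpha_{l-1}}(t/2)^{-r}$). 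Since $\|u(\cdot,t/2)\|_\infty\le \|u\|_{v,\infty}v(t/2)\le C\|u\|_{v,\infty}v(2^{-\alpha_l})$ by doubling (because $t/2 > 2^{-\alpha_{l-1}-1}\ge 2^{-\alpha_l}$ up to a bounded factor), this yields the bound $C\|u\|_{v,\infty}v(2^{-\alpha_l})(2^{\alpha_{l-1}}t)^{-r}$, and since $m\le r$ we may replace the exponent $r$ by $m$ at the cost of a constant because $2^{\alpha_{l-1}}t\ge 1$.

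The main obstacle is the bookkeeping at the two ends of the block: Lemma~\ref{l:DP} is stated for $E_J-E$ starting from level $0$, whereas here the block starts at level $\alpha_{l-1}$, so one must rescale by $2^{\alpha_{l-1}}$ to reduce to the lemma's setting, and then carefully track that the Poisson scale becomes $2^{\alpha_{l-1}}t \ge 1$ after rescaling. A secondary point is the borderline case $t\approx 2^{-\alpha_{l-1}}$, where $(2^{\alpha_{l-1}}t)^{-m}\approx 1$ and the estimate must reduce to the already-established \eqref{eq:wave}; this is automatic once the constants are allowed to depend on $m$. One should also note that for $l=0$ the statement is vacuous or reduces to \eqref{eq:wave} with the convention $\alpha_{-1}=\alpha_0=0$, so no separate argument is needed there. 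Everything else is the routine rescaling and geometric-series summation already carried out in Lemmas~\ref{l:PE} and~\ref{l:DP} and in the proof of Theorem~\ref{th:2}.
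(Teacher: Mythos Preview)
Your proposal is correct and follows exactly the paper's own argument: write $u(\cdot,t)=u(\cdot,t/2)\ast P_{t/2}$, pull out $\|u\|_{v,\infty}v(t/2)$, rescale the kernel $E_{\alpha_l}-E_{\alpha_{l-1}}$ by $2^{\alpha_{l-1}}$ to reduce to Lemma~\ref{l:DP} with $s=2^{\alpha_{l-1}}t/2$, and then use $v(t/2)\le v(2^{-\alpha_l})$ together with $(2^{\alpha_{l-1}}t)^{-r}\le(2^{\alpha_{l-1}}t)^{-m}$. One minor remark: the inequality $v(t/2)\le v(2^{-\alpha_l})$ needs only the monotonicity of $v$ (since $\alpha_l\ge\alpha_{l-1}+1$ gives $t/2\ge 2^{-\alpha_l}$ outright), not the doubling condition.
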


\begin{proof}
As above we have
\[
g_l(x,t)=\int_{\R^n}(E_{\alpha_l}(x,y)-E_{\alpha_{l-1}}(x,y))u(y,t)dy.\]
Now $u(y,t)=(u(\cdot,t/2)\ast P_{t/2})(y)$ and 
\[
|g_l(x,t)|\le\|u\|_{v,\infty}v(t/2)
\int_{\R^n}
\left|\int_{\R^n}(E_{\alpha_l}(x,y)-E_{\alpha_{l-1}}(x,y))P_{t/2}(y-w)dy\right|dw.\]
Rescaling and applying Lemma \ref{l:DP} we get (\ref{eq:waved}).
\end{proof}

%%%%%%%%%%%%%%%%%%%%%%%%%%%%%%%%%%

\subsection{Vertical weighted average} Now we consider the following integral
\[
I_u(x,s)=\int_{s}^1u(x,t)d(1/v(t))\]
and its multiresolution approximations.
We use the wavelet decomposition of $u(x,t)$ and write $I_u$ as the sum of functions $G_l(x,s)$,
\[
G_l(x,s)=\int_{s}^1g_l(x,t)d(1/v(t)),\quad l\ge0,
\]
where $g_l$ were defined in Theorem \ref{th:1}.

\begin{lemma}
Let $\m$ satisfy (\ref{eq:m}), $r>\m+n$, and let $G_l(x,s)$ be defined as above. Then
\beq\label{eq:Gs}
|G_l(x,s)|\le C \|u\|_{v,\infty}2^{-\m\alpha_{l-1}}s^{-\m}v(2^{-\alpha_l})v(s)^{-1},
\eeq 
when  $s\ge 2^{-\alpha_{l-1}}$ and
\beq\label{eq:G}
|G_l(x,s)|\le C\|u\|_{v,\infty},\quad s>0.
\eeq
\end{lemma}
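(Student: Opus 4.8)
The plan is to estimate the two contributions to $G_l(x,s)$ coming from the decomposition
\[
G_l(x,s)=\int_s^1 g_l(x,t)\,d(1/v(t))
\]
by splitting the range of integration at the scale $2^{-\alpha_{l-1}}$ and using the two pointwise bounds on $g_l$ that we already have: the basic estimate \eqref{eq:wave}, valid for all $t$, and the improved estimate \eqref{eq:waved}, valid for $t>2^{-\alpha_{l-1}}$. The measure $d(1/v(t))$ is a positive measure of total mass at most $1/v(1)\le \const$ on $(0,1]$, which immediately gives \eqref{eq:G}: indeed $|G_l(x,s)|\le \sup_t\|g_l(\cdot,t)\|_\infty\cdot\int_0^1 d(1/v(t))\le C\|u\|_{v,\infty}v(2^{-\alpha_l})\cdot(1/v(1))$, and since $v(2^{-\alpha_l})\ge 1$ one has to be a little careful — actually \eqref{eq:G} follows more cheaply by observing that on the range of integration $t\ge s$, so $1/v(t)\le 1/v(s)$, and combining $\|g_l(\cdot,t)\|_\infty\le C\|u\|_{v,\infty}v(2^{-\alpha_l})$ with the improved decay for $t$ away from $2^{-\alpha_{l-1}}$; I will give the clean argument below.

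For the main estimate \eqref{eq:Gs} I would assume $s\ge 2^{-\alpha_{l-1}}$, so that the \emph{entire} interval of integration $[s,1]$ lies in the region $t>2^{-\alpha_{l-1}}$ where \eqref{eq:waved} applies. Thus
\[
|G_l(x,s)|\le C\|u\|_{v,\infty}v(2^{-\alpha_l})\int_s^1 (2^{\alpha_{l-1}}t)^{-\m}\,d(1/v(t)).
\]
Now integrate by parts (or estimate directly): the factor $t^{-\m}$ is decreasing, so its largest value on $[s,1]$ is $s^{-\m}$, and $\int_s^1 d(1/v(t))\le 1/v(s)$. Being slightly more careful, write $\int_s^1 t^{-\m}\,d(1/v(t))$; since $1/v$ is increasing and $t^{-\m}\le s^{-\m}$ on the whole range, this is at most $s^{-\m}\int_s^1 d(1/v(t))\le s^{-\m}/v(s)$. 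Multiplying through by $2^{-\m\alpha_{l-1}}$ gives exactly
\[
|G_l(x,s)|\le C\|u\|_{v,\infty}\,2^{-\m\alpha_{l-1}}s^{-\m}v(2^{-\alpha_l})v(s)^{-1},
\]
which is \eqref{eq:Gs}.

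For \eqref{eq:G} with general $s>0$, if $s\ge 2^{-\alpha_{l-1}}$ then \eqref{eq:Gs} already gives the bound because $2^{-\m\alpha_{l-1}}s^{-\m}\le 1$ and $v(2^{-\alpha_l})v(s)^{-1}\le v(2^{-\alpha_l})v(2^{-\alpha_{l-1}})^{-1}\le 1$ (the weight is decreasing and $\alpha_l\ge\alpha_{l-1}$ — wait, that ratio is $\ge 1$, not $\le 1$; one must instead use \eqref{eq:m}). So for $s<2^{-\alpha_{l-1}}$, split $\int_s^1=\int_s^{2^{-\alpha_{l-1}}}+\int_{2^{-\alpha_{l-1}}}^1$. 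On the second piece apply the bound just obtained (it is $\le C\|u\|_{v,\infty}$ by \eqref{eq:m}, taking $s=2^{-\alpha_{l-1}}$ there). On the first piece use \eqref{eq:wave}: $|g_l(x,t)|\le C\|u\|_{v,\infty}v(2^{-\alpha_l})$, so the contribution is $\le C\|u\|_{v,\infty}v(2^{-\alpha_l})\int_s^{2^{-\alpha_{l-1}}}d(1/v(t))\le C\|u\|_{v,\infty}v(2^{-\alpha_l})\cdot v(2^{-\alpha_{l-1}})^{-1}$, and this last product is bounded by a constant because $v(2^{-\alpha_l})\le A\,v(2^{-\alpha_{l-1}})\cdots$ — here one uses that consecutive blocks only increase $v$ by the bounded factor $A$ (since $v(2^{-\alpha_l})\in[A^l,A^{l+1})$), so $v(2^{-\alpha_l})/v(2^{-\alpha_{l-1}})<A^2$. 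The main obstacle, such as it is, is purely bookkeeping: making sure the two weight ratios $v(2^{-\alpha_l})/v(2^{-\alpha_{l-1}})$ (bounded by $A^2$) and $2^{-\m\alpha_l}v(2^{-\alpha_l})/(2^{-\m\alpha_{l-1}}v(2^{-\alpha_{l-1}}))<\gamma<1$ are invoked in the right places, and keeping track of which range of $t$ each of \eqref{eq:wave}, \eqref{eq:waved} is licensed on.
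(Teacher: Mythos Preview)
Your argument for \eqref{eq:G} is essentially the paper's and is fine. The gap is in your proof of \eqref{eq:Gs}: the step
\[
\int_s^1 d(1/v(t))\le \frac{1}{v(s)}
\]
is false. Since $1/v$ is increasing, $\int_s^1 d(1/v(t))=1/v(1)-1/v(s)$, which is of order $1$, not $1/v(s)$ (and $1/v(s)$ can be arbitrarily small). With the correct bound $\int_s^1 d(1/v(t))\le 1$, your crude estimate $t^{-\m}\le s^{-\m}$ only yields
\[
|G_l(x,s)|\le C\|u\|_{v,\infty}\,2^{-\m\alpha_{l-1}}s^{-\m}v(2^{-\alpha_l}),
\]
i.e.\ \eqref{eq:Gs} \emph{without} the factor $v(s)^{-1}$. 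That factor is not cosmetic: it is exactly what makes $\sum_{l>L}|G_l(x,2^{-\alpha_L})|$ bounded in the proof of Lemma~\ref{l:MA}. Without it the first term of that sum is already of size $v(2^{-\alpha_{L+1}})\to\infty$.

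The point is that replacing $t^{-\m}$ by $s^{-\m}$ on the whole interval $[s,1]$ is far too wasteful: near $t=1$ the integrand is small both because $t^{-\m}$ is small and because the mass $d(1/v)$ has already accumulated, and you must exploit the interaction. The paper does this by partitioning $[s,1]$ into the blocks $(2^{-\alpha_i},2^{-\alpha_{i-1}})$. On each block $t^{-\m}\le 2^{\m\alpha_i}$ and the $d(1/v)$--mass is at most $1/v(2^{-\alpha_{i-1}})\le A^2/v(2^{-\alpha_i})$, giving a contribution $\lesssim 2^{\m\alpha_i}/v(2^{-\alpha_i})$. By \eqref{eq:m} the quantities $2^{\m\alpha_i}/v(2^{-\alpha_i})$ grow geometrically in $i$, so the sum is dominated by its last term, which is $\lesssim s^{-\m}/v(s)$. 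This block decomposition (or an equivalent integration by parts followed by the same geometric summation) is the missing ingredient.
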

\begin{proof}
Let $s\ge 2^{-\alpha_{l-1}}$. There exists $L<l-1$ such that $s\in(2^{-\alpha_{L+1}},2^{-\alpha_{L}})$. Applying (\ref{eq:waved}) we obtain
\[
|G_l(x,s)|\le C\|u\|_{v,\infty}v(2^{-\alpha_l})\int_s^1 2^{-\m\alpha_{l-1}}t^{-\m}d(1/v(t)).\]
Then integrating over the intervals $(2^{-\alpha_{i}},2^{-\alpha_{i-1}})$ and using (\ref{eq:m}), we have
\begin{multline*}
\int_s^1 t^{-\m}d(1/v(t))=\int_s^{2^{-\alpha_{L}}}t^{-\m}d(1/v(t))+\sum_{i=1}^{L}\int_{2^{-\alpha_{i}}}^{2^{-\alpha_{i-1}}}t^{-\m}d(1/v(t))
\le \\
A^2\left(\frac{s^{-\m}}{v(s)}+\sum_{i=1}^{L-1}\frac{2^{\m\alpha_i}}{v(2^{-\alpha_i})}\right)\le 
C_1\left(\frac{s^{-\m}}{v(s)}+\frac{2^{\m\alpha_{L}}}{v(2^{-\alpha_{L}})}\right)\le
C_2s^{-\m}v(s)^{-1}.
\end{multline*}
Inequality (\ref{eq:Gs}) follows.

Further, for $s<2^{-\alpha_{l-1}}$ we apply (\ref{eq:wave}) and get
\[
|G_l(x,s)|\le |G_l(x,2^{-\alpha_{l-1}})|+\int_s^{2^{-\alpha_{l-1}}}|g_l(x,t)|d(1/v(t))\le
C\|u\|_{v,\infty}.
\]
\end{proof}
The lemma implies that
\[
G_l(x)=\int_0^1g_l(x,t)d(1/v(t))\]
is well-defined, $|G_l(x)|\le C\|u\|_{v,\infty}$ and $G_l\in V_{\alpha_l}(\infty)$.

%%%%%%%%%%%%%%%%%%%%%%%%%

\subsection{Martingale approximation}
We will approximate $I_u(x,2^{-\alpha_L})$ by a martingale. First we remark that for $s\in (2^{-\alpha_{L+1}},2^{-\alpha_{L}})$
\beq\label{eq:Ia}
|I_u(x,s)-I_u(x, 2^{-\alpha_{L}})|\le
\int_s^{2^{-\alpha_L}}|u(x,t)|d(1/v(t))\le C\|u\|_{v,\infty}.\eeq
Now let $\F_L$ be the $\sigma$-algebra generated by dyadic cubes of size $2^{-\alpha_L}$. We define
\[
\Lambda_{l,L}=\cE(G_l|\F_L).\]
Clearly $|\Lambda_{l,L}|\le C\|u\|_{v,\infty}$ and for each $l\ge 0$ we obtain a martingale $\{\Lambda_{l,L}\}_{L=1}^\infty$. 

Now we assume that the wavelets have compact supports and remind that
\[
G_l(x)=\sum_{j=\alpha_{l-1}+1}^{\alpha_l}\sum_{p=1}^q\sum_{k\in\Z^n}\int_0^1(u(y,t),\psi_{p,jk}(y))d(1/v(t))\psi_{p,jk}(x),\quad  l\ge 1.\]
%By Corollary \ref{cor:1} the wavelet coefficients of $G_l$ satisfy
%\beq
%\left|\int_0^1(u(y,t),\psi_{p,jk}(y))d(1/v(t))\right|\le %C2^{-nj/2}\|u\|_{v,\infty}v(2^{-j}).
%\eeq

\begin{lemma}\label{l:Lambda}
Let $\Lambda_{l,L}$ be as above, assume that $L<l$ then
\[
|\Lambda_{l,L}(x)|\le C\|u\|_{v,\infty} 2^{\alpha_L-\alpha_{l-1}}
\]
for any $x\in\R^n$. 
\end{lemma}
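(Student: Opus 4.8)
The plan is to estimate $\Lambda_{l,L}=\cE(G_l|\F_L)$ by using the cancellation/smoothness of the wavelet block together with the averaging over dyadic cubes of size $2^{-\alpha_L}$. Write $G_l(x)=\int_0^1 g_l(x,t)\,d(1/v(t))$ and recall from Theorem \ref{th:1} that $g_l(\cdot,t)=\int_{\R^n}(E_{\alpha_l}(x,y)-E_{\alpha_{l-1}}(x,y))u(y,t)\,dy$. The key point is that $\cE(\cdot|\F_L)$ is convolution-like averaging at scale $2^{-\alpha_L}$, and the kernel $E_{\alpha_l}-E_{\alpha_{l-1}}$ annihilates $V_{\alpha_{l-1}}$; since $\alpha_L<\alpha_{l-1}$ the conditional expectation essentially projects onto a coarser space, so the block is almost killed. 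More precisely, I would use the smooth-MRA identity \eqref{eq:smoothMRA}: $D(x,y)=\sum_{|\beta|=r}\partial_y^\beta D_\beta(x,y)$ with $D_\beta$ Schwartz, so $E_{\alpha_l}-E_{\alpha_{l-1}}=\sum_{j=\alpha_{l-1}}^{\alpha_l-1}D_j$ and each $D_j$ has $r$ derivatives in $y$ falling on a rapidly decreasing kernel at scale $2^{-j}$.

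The main step is to bound $|\cE(D_j G_{l}^{(t)}|\F_L)|$ where I split $G_l$ into its dyadic pieces. Averaging $\psi_{p,jk}$ (or equivalently $D_j$-images) over a dyadic cube $Q$ of size $2^{-\alpha_L}$ with $\alpha_L<j$: the cancellation of the wavelet and the fact that it lives at scale $2^{-j}\ll 2^{-\alpha_L}$ means $|Q|^{-1}\int_Q \psi_{p,jk}| $ is extremely small, but the crude bound we actually need is obtained differently. I would instead estimate directly: $\Lambda_{l,L}(x)=|Q|^{-1}\int_Q G_l(y)\,dy$ for the dyadic cube $Q\ni x$ of side $2^{-\alpha_L}$, and since $G_l\in V_{\alpha_l}(\infty)$ with $\|G_l\|_\infty\le C\|u\|_{v,\infty}$ (from the lemma just proved), combined with the fact that $G_l$ is, up to the $v$-weighted integral, a projection difference that vanishes on $V_{\alpha_{l-1}}$, one gets an extra gain. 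Concretely, write $|Q|^{-1}\int_Q G_l = |Q|^{-1}\int_Q (G_l - E_{\alpha_L}G_l)$ using that $E_{\alpha_L}G_l$ averaged over $Q$ against the indicator differs from $\int_Q G_l$ by a controlled amount, but $E_{\alpha_L}G_l=0$ since $G_l\perp V_{\alpha_L}\supset$ ... — wait, rather $E_{\alpha_L}g_l(\cdot,t)=0$ because $g_l(\cdot,t)\in V_{\alpha_l}\ominus V_{\alpha_{l-1}}$ and $\alpha_L\le\alpha_{l-1}$, hence $E_{\alpha_L}G_l=0$. So $\Lambda_{l,L}(x)=|Q|^{-1}\int_Q G_l(y)\,dy$ where $G_l=G_l-E_{\alpha_L}G_l$, and one estimates $|Q|^{-1}\int_Q (G_l-E_{\alpha_L}G_l)$ by comparing the cube average with the smooth averaging $E_{\alpha_L}$: the difference between $\cE(\cdot|\F_L)$ and $E_{\alpha_L}$ applied to a function in $V_{\alpha_l}(\infty)$ is bounded using Bernstein's inequality \eqref{eq:Bern}, giving a factor $2^{\alpha_L-\alpha_l}$ times $\|G_l\|_\infty$ — and then $2^{\alpha_L-\alpha_l}\le 2^{\alpha_L-\alpha_{l-1}}$.

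So the skeleton is: (1) observe $E_{\alpha_L}G_l=0$ since $G_l$ is an integral of functions orthogonal to $V_{\alpha_{l-1}}\supseteq V_{\alpha_L}$; (2) write $\Lambda_{l,L}=\cE(G_l-E_{\alpha_L}G_l\,|\,\F_L)$ and note $\cE(E_{\alpha_L}G_l\,|\,\F_L)$ would vanish anyway, so $\Lambda_{l,L}=\cE(G_l\,|\,\F_L)-E_{\alpha_L}G_l$; (3) estimate the difference between the dyadic conditional expectation at scale $2^{-\alpha_L}$ and the smooth projection $E_{\alpha_L}$ acting on $G_l\in V_{\alpha_l}(\infty)$: both are averaging operators at scale $2^{-\alpha_L}$, their difference applied to $f\in V_{\alpha_l}(\infty)$ is controlled by $2^{-\alpha_L}\|\nabla f\|_\infty \le 2^{-\alpha_L}\cdot C2^{\alpha_l}\|f\|_\infty$ by Bernstein, but that gives the wrong sign of exponent — so instead I compare at scale $2^{-\alpha_L}$ directly: $E_{\alpha_L}f$ and the cube-average of $f$ both reproduce $V_{\alpha_L}$-data, and since $f=G_l$ has zero $V_{\alpha_L}$-component, one applies the oscillation estimate that an averaging kernel at scale $2^{-\alpha_L}$ applied to a function whose Fourier content lies at frequencies $\gtrsim 2^{\alpha_{l-1}}$ picks up only a tail of size $(2^{\alpha_L-\alpha_{l-1}})^{r}$ or, crudely, $2^{\alpha_L-\alpha_{l-1}}$; (4) combine with $\|G_l\|_\infty\le C\|u\|_{v,\infty}$. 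The main obstacle is step (3): making rigorous that the dyadic average of the wavelet block $G_l$ over a coarse cube is small of order $2^{\alpha_L-\alpha_{l-1}}$, which really uses that the dyadic cubes of $\F_L$ are $\F_{\alpha_{l-1}}$-measurable and the cancellation property of the wavelets $\psi_{p,jk}$ against constants on each such cube — one expands $G_l$ in the compactly supported wavelet basis, uses that on a dyadic cube $Q$ of side $2^{-\alpha_L}$ almost all the $\psi_{p,jk}$ (with $j\ge\alpha_{l-1}+1$) are supported inside $Q$ and integrate to zero, and the remaining boundary-straddling wavelets number $O(2^{(\alpha_{l-1}-\alpha_L)(n-1)}\cdot 2^{n(j-\alpha_{l-1})})$ ... so the surviving contribution, after dividing by $|Q|=2^{-n\alpha_L}$ and summing the geometric series in $j$ via \eqref{eq:coef1} and \eqref{eq:m}, telescopes to $C\|u\|_{v,\infty}2^{\alpha_L-\alpha_{l-1}}$.
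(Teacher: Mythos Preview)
Your final approach --- expand $G_l$ in the compactly supported wavelet basis and observe that only the wavelets whose support straddles $\partial Q$ survive the cube average --- is exactly the paper's proof. But two details in your sketch are wrong and would spoil the argument.

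First, the boundary count. For a dyadic cube $Q$ of side $2^{-\alpha_L}$, the wavelets $\psi_{p,jk}$ of generation $j>\alpha_L$ whose support meets $Q$ but is not contained in $Q$ number $O(2^{(n-1)(j-\alpha_L)})$ (a $2^{-j}$-neighbourhood of $\partial Q$ has volume comparable to $2^{-(n-1)\alpha_L-j}$, and each wavelet occupies volume comparable to $2^{-nj}$). Your count $O(2^{(\alpha_{l-1}-\alpha_L)(n-1)}2^{n(j-\alpha_{l-1})})$ is too large by a factor $2^{j-\alpha_{l-1}}$; with it the per-generation contribution becomes constant in $j$, and the sum over $j\in(\alpha_{l-1},\alpha_l]$ acquires an extra factor $\alpha_l-\alpha_{l-1}$, which is precisely what one cannot afford for weights that are not of power-type growth.

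Second, neither (\ref{eq:coef1}) nor (\ref{eq:m}) is used. The coefficient bound comes simply from $\|G_l\|_\infty\le C\|u\|_{v,\infty}$ (proved just before the lemma) together with $\|\psi_{p,jk}\|_1\le C2^{-jn/2}$, giving $|a_p^{jk}|=|(G_l,\psi_{p,jk})|\le C2^{-jn/2}\|u\|_{v,\infty}$. With the correct count one then obtains the genuine geometric series
\[
|\Lambda_{l,L}(x)|\le C\|u\|_{v,\infty}\sum_{j=\alpha_{l-1}+1}^{\alpha_l}2^{n\alpha_L}\cdot 2^{(n-1)(j-\alpha_L)}\cdot 2^{-jn}=C\|u\|_{v,\infty}\sum_{j>\alpha_{l-1}}2^{\alpha_L-j}\le C\|u\|_{v,\infty}\,2^{\alpha_L-\alpha_{l-1}}.
\]
The earlier attempts in your sketch (Bernstein, comparing $\cE(\cdot|\F_L)$ with $E_{\alpha_L}$) are indeed dead ends, as you recognised.
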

\begin{proof}
We have 
\[
G_l(x)=\sum_{j=\alpha_{l-1}+1}^{\alpha_l}\sum_{p=1}^q\sum_{k\in\Z^n} a_{p}^{jk}\psi_{p,jk}(x),\]
where the series converges uniformly in $x$ and 
\[|a_p^{jk}|\le \|G_l\|_\infty\|\psi_{p,jk}\|_1\le C2^{-jn/2}\|u\|_{v,\infty}.
\]
Let $x\in Q$, where $Q$ is a dyadic cube of size $2^{-\alpha_L}$. We fix $l>L$ and consider wavelets $\psi_{p,jk}$ with $\alpha_{l-1}<j\le \alpha_l$ such that $\supp(\psi_{p,jk})\cap Q\neq\emptyset$ but $\supp(\psi_{p,jk})\not\subset Q$. In each dyadic generation $j$ we have at most $C2^{(n-1)(j-\alpha_L)}$ such wavelets and the $L^1$-norm of wavelets in this generation is bounded by $C2^{-jn/2}$.  Clearly
\begin{multline*}
|\Lambda_{l,L}(x)|=|\cE(G_l|\F_L)(x)|=\left| \sum_{j=\alpha_{l-1}+1}^{\alpha_l}\sum_{p=1}^q\sum_{k\in\Z^n} a_{p}^{jk}|Q|^{-1}\int_Q\psi_{p,jk}(y)dy\right|\le\\
C\|u\|_{v,\infty}\sum_{j=\alpha_{l-1}+1}^{\alpha_l}2^{(n-1)(j-\alpha_L)}2^{-jn/2}2^{n\alpha_L}2^{-jn/2}\le C\|u\|_{v,\infty} 2^{\alpha_L-\alpha_{l-1}}.
\end{multline*}
\end{proof}

Then the series $\Gamma_L(x)=\sum_{l=0}^\infty\Lambda_{l,L}(x)$ converges uniformly on $\R^n$ and $\{\Gamma_L\}_{L=0}^\infty$ is a martingale with respect to the sequence of $\sigma$-algebras $\F_L$.
It turns out that this martingale provides a good approximation for the weighted average $I_u$.

\begin{lemma}\label{l:MA} 
Let $u\in h_{v,\infty}$ and let $I_u$ and $\Gamma_L$ be defined as above, then
\beq\label{eq:MA}
|I_u(x,2^{-\alpha_L})-\Gamma_L(x)|\le C\|u\|_{v,\infty}.
\eeq
\end{lemma}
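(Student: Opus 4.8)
The plan is to compare the two objects block by block, writing
\[
I_u(x,2^{-\alpha_L})-\Gamma_L(x)=\sum_{l=0}^\infty\bigl(G_l(x,2^{-\alpha_L})-\Lambda_{l,L}(x)\bigr),
\]
and to estimate each summand using a natural threshold at $l=L$. The point is that $\Lambda_{l,L}=\cE(G_l\mid\F_L)$ only truly depends on the "low-frequency" part of $G_l$ (the part on scales coarser than $2^{-\alpha_L}$) while $G_l(\cdot,2^{-\alpha_L})$ is the Poisson average of $g_l$ at height $2^{-\alpha_L}$; for $l\le L$ both $G_l(x,2^{-\alpha_L})$ and $G_l(x)$ differ only by the tail $\int_0^{2^{-\alpha_L}}g_l(x,t)\,d(1/v(t))$, which by \eqref{eq:wave} is $O(\|u\|_{v,\infty})$ per block but would sum badly over all $l\le L$ — so the finer split is needed.

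More precisely, first I would split the sum at $l=L$. For $l>L$, Lemma \ref{l:Lambda} gives $|\Lambda_{l,L}(x)|\le C\|u\|_{v,\infty}2^{\alpha_L-\alpha_{l-1}}$, and the estimate \eqref{eq:Gs} of the previous lemma (applied with $s=2^{-\alpha_L}\ge 2^{-\alpha_{l-1}}$) gives $|G_l(x,2^{-\alpha_L})|\le C\|u\|_{v,\infty}2^{-\m\alpha_{l-1}}2^{\m\alpha_L}v(2^{-\alpha_l})v(2^{-\alpha_L})^{-1}$, which by \eqref{eq:m} is summable in $l$ and contributes $O(\|u\|_{v,\infty})$; the sum of $2^{\alpha_L-\alpha_{l-1}}$ over $l>L$ is a geometric-type series bounded by a constant, so the $l>L$ part of the difference is $O(\|u\|_{v,\infty})$. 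For $l\le L$, I would instead use that $G_l(\cdot,t)=g_l(\cdot,t)$-averages, and that for $t\ge 2^{-\alpha_L}\ge 2^{-\alpha_l}$ the block $g_l(\cdot,t)$ lies in $V_{\alpha_l}(\infty)\subset V_{\alpha_L}(\infty)$ (since $\alpha_l\le\alpha_L$ for $l\le L$), hence $g_l(\cdot,t)$ is well-approximated on dyadic cubes of size $2^{-\alpha_L}$: combining \eqref{eq:waved} for the part $t\ge 2^{-\alpha_L}$ with Bernstein's inequality \eqref{eq:Bern} to control the oscillation of $g_l(\cdot,t)$ across a cube of size $2^{-\alpha_L}$ by $C2^{\alpha_l-\alpha_L}\|g_l(\cdot,t)\|_\infty$, and then using \eqref{eq:m} to sum the geometric factor $2^{\alpha_l-\alpha_L}$ over $l\le L$. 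The remaining discrepancy for $l\le L$ is exactly the tail $\int_0^{2^{-\alpha_L}}g_l(x,t)\,d(1/v(t))$ entering $G_l(x)$ versus $G_l(x,2^{-\alpha_L})$, but for $l\le L$ one has $2^{-\alpha_L}\le 2^{-\alpha_{l-1}}$ only when $l=L$ (roughly), so this tail is nonzero for at most boundedly many values of $l$ near $L$ and each such tail is $O(\|u\|_{v,\infty})$ by \eqref{eq:wave}; wait — more carefully, one uses that $\Lambda_{l,L}=\cE(G_l\mid\F_L)$ and $G_l\in V_{\alpha_l}(\infty)$ with $\alpha_l\le\alpha_L$, so $G_l$ is \emph{already} $\F_L$-measurable up to oscillation $O(2^{\alpha_l-\alpha_L}\|u\|_{v,\infty})$, and separately $|G_l(x,2^{-\alpha_L})-G_l(x)|\le\int_0^{2^{-\alpha_L}}|g_l(x,t)|\,d(1/v(t))$, which by \eqref{eq:waved} (valid for $t>2^{-\alpha_{l-1}}$, hence after splitting at $2^{-\alpha_{l-1}}$ using \eqref{eq:wave} below it) is $O(\|u\|_{v,\infty}2^{-\m\alpha_{l-1}}2^{\m\alpha_L}v(2^{-\alpha_l})v(2^{-\alpha_L})^{-1})$ — wait, this has the wrong monotonicity for $l\le L$; the clean statement is simply that for $l\le L$, $g_l(\cdot,t)\in V_{\alpha_L}(\infty)$ for all $t>0$, so $I_u$-block $G_l$ is in $V_{\alpha_L}(\infty)$ and $\cE(G_l\mid\F_L)$ reproduces $G_l$ up to the oscillation estimate, giving the bound $\sum_{l\le L}C\|u\|_{v,\infty}2^{\alpha_l-\alpha_L}\le C\|u\|_{v,\infty}$ by \eqref{eq:m}.

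Putting the two ranges together yields $|I_u(x,2^{-\alpha_L})-\Gamma_L(x)|\le C\|u\|_{v,\infty}$, which is \eqref{eq:MA}. I expect the main obstacle to be the $l\le L$ range: one must argue that $\cE(\cdot\mid\F_L)$ only distorts a function in $V_{\alpha_l}(\infty)$ with $\alpha_l\le\alpha_L$ by an amount controlled by its oscillation on cubes of size $2^{-\alpha_L}$ — this is where Bernstein's inequality \eqref{eq:Bern} and the rapid decay of $\phi$ (to handle the part of the wavelet supports straddling the cube boundary, exactly as in the proof of Lemma \ref{l:Lambda}) enter — and then that the geometric factor $2^{\alpha_l-\alpha_L}$ is summable, which is guaranteed by the doubling choice of $\{\alpha_l\}$ and \eqref{eq:m}. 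The $l>L$ range is routine once Lemma \ref{l:Lambda} and the $G_l$-estimate \eqref{eq:Gs} are in hand. I would also need to record at the outset that $\Gamma_L$ is genuinely $\F_L$-measurable and that the series defining it converges uniformly, both of which follow from Lemma \ref{l:Lambda} and the discussion preceding this lemma.
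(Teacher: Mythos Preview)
Your overall architecture is exactly the paper's: split at $l=L$, handle $l>L$ via \eqref{eq:Gs} together with Lemma~\ref{l:Lambda}, and handle $l\le L$ by combining the Bernstein inequality \eqref{eq:Bern} (to control $|\Lambda_{l,L}-G_l|$ via $\|\nabla G_l\|_\infty 2^{-\alpha_L}\le C\|u\|_{v,\infty}2^{\alpha_l-\alpha_L}$, since $G_l\in V_{\alpha_l}(\infty)$) with a separate bound on the tail $|G_l(x)-G_l(x,2^{-\alpha_L})|$.

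There is one genuine gap. In the $l\le L$ range you need to estimate
\[
|G_l(x)-G_l(x,2^{-\alpha_L})|=\left|\int_0^{2^{-\alpha_L}}g_l(x,t)\,d(1/v(t))\right|,
\]
and you try to invoke \eqref{eq:waved}. That estimate requires $t>2^{-\alpha_{l-1}}$, but here $t<2^{-\alpha_L}\le 2^{-\alpha_l}\le 2^{-\alpha_{l-1}}$, so \eqref{eq:waved} is unavailable on the whole interval---this is why you (correctly) sensed the ``wrong monotonicity''. Your final ``clean statement'' then simply drops this term, leaving the bound $|\Lambda_{l,L}-G_l|\le C\|u\|_{v,\infty}2^{\alpha_l-\alpha_L}$ without closing the gap between $G_l$ and $G_l(\cdot,2^{-\alpha_L})$. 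The fix is to use the uniform estimate \eqref{eq:wave} instead: $|g_l(x,t)|\le C\|u\|_{v,\infty}v(2^{-\alpha_l})$ for all $t>0$, hence
\[
|G_l(x)-G_l(x,2^{-\alpha_L})|\le C\|u\|_{v,\infty}\,\frac{v(2^{-\alpha_l})}{v(2^{-\alpha_L})}\le C\|u\|_{v,\infty}A^{\,l-L+1},
\]
which is summable over $l\le L$. This is precisely what the paper does: it records the two contributions $2^{\alpha_l-\alpha_L}$ and $A^{l-L}$ and sums both geometric series. A minor remark: the summability of $\sum_{l\le L}2^{\alpha_l-\alpha_L}$ needs only that the $\alpha_l$ are strictly increasing integers (so $\alpha_L-\alpha_l\ge L-l$), not \eqref{eq:m}.
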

\begin{proof}
First, by (\ref{eq:Gs}) and (\ref{eq:m}) we have
\[
|I_u(x,2^{-\alpha_L})-\sum_{l=0}^LG_l(x,2^{-\alpha_L})|\le C\|u\|_{v,\infty}.\]
Similarly, by Lemma \ref{l:Lambda},
\[
|\Gamma_L(x)-\sum_{l=0}^L\Lambda_{l,L}(x)|\le C\|u\|_{v,\infty}.\] 
Further, for $l\le L$, using (\ref{eq:wave}) we get
\begin{multline*}
|\Lambda_{l,L}(x)-G_l(x,2^{-\alpha_L})|\le
|\Lambda_{l,L}(x)-G_l(x)|+
|G_{l}(x)-G_l(x,2^{-\alpha_L})|\le\\
\|\nabla G_l\|_\infty 2^{-\alpha_L}+C\|u\|_{v,\infty}v(2^{-\alpha_l})(v(2^{-\alpha_L}))^{-1}\le
C\|u\|_{v,\infty}(2^{\alpha_l-\alpha_L}+A^{l-L}).
\end{multline*}
In the last step we applied the Bernstein inequality (\ref{eq:Bern}) to $G_l\in V_{\alpha_l}(\infty)$. Finally, we sum up the estimates to obtain
(\ref{eq:MA}).
\end{proof}

Now we can prove the law of the iterated logarithm for the weighted average.
\begin{theorem}\label{th:osc}
Let $u\in h_{v,\infty}$ and
\[
I_u(x,s)=\int_s^1 u(x,t)d\left(\frac1{v}(t)\right).\]
Then 
\begin{equation}\label{eq:osc}
\limsup_{s\rightarrow 0+}\frac{I_u(x,s)}{\sqrt{\log v(s)\log\log\log v(s)}}\le C\|u\|_{v,\infty},
\end{equation}
for almost every $x\in\R^n$.
\end{theorem}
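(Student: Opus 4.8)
The plan is to reduce (\ref{eq:osc}) to the law of the iterated logarithm for the martingale $\{\Gamma_L\}_{L\ge0}$ constructed above. By Lemma~\ref{l:MA} we have $|I_u(x,2^{-\alpha_L})-\Gamma_L(x)|\le C\|u\|_{v,\infty}$, and by (\ref{eq:Ia}) the quantity $I_u(x,s)$ with $s\in(2^{-\alpha_{L+1}},2^{-\alpha_L})$ differs from $I_u(x,2^{-\alpha_L})$ by at most $C\|u\|_{v,\infty}$; moreover $v(2^{-\alpha_l})\in[A^l,A^{l+1})$ together with the doubling condition gives $\log v(s)\asymp L$, whence $\sqrt{\log v(s)\log\log\log v(s)}\asymp\sqrt{L\log\log L}$. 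So it suffices to show
\[
\limsup_{L\to\infty}\frac{\Gamma_L(x)}{\sqrt{L\log\log L}}\le C\|u\|_{v,\infty}\qquad\text{for a.e. }x.
\]
To view $\{\Gamma_L,\F_L\}$ as a martingale on a probability space I would restrict to a fixed dyadic cube equipped with normalized Lebesgue measure; since the $\F_L$ are generated by dyadic subcubes of generation $\alpha_L$, this restriction is an honest martingale, and the a.e. statement on all of $\R^n$ follows by a countable union over a dyadic tiling.

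The key step is the uniform bound on the martingale differences $X_L=\Gamma_L-\Gamma_{L-1}$, namely $\|X_L\|_\infty\le C\|u\|_{v,\infty}$. Writing $X_L=\sum_{l\ge0}(\Lambda_{l,L}-\Lambda_{l,L-1})$ I would split into the ranges $l<L$, $l=L$, $l>L$. For $l<L$, since $G_l\in V_{\alpha_l}(\infty)$ with $\|G_l\|_\infty\le C\|u\|_{v,\infty}$, the Bernstein inequality (\ref{eq:Bern}) gives, exactly as in the proof of Lemma~\ref{l:MA}, $|\Lambda_{l,L}-G_l|\le\|\nabla G_l\|_\infty\,2^{-\alpha_L}\le C2^{\alpha_l-\alpha_L}\|u\|_{v,\infty}$ and likewise with $L-1$, so $|\Lambda_{l,L}-\Lambda_{l,L-1}|\le C2^{\alpha_l-\alpha_{L-1}}\|u\|_{v,\infty}$; as the $\alpha_l$ are strictly increasing integers, summing over $l<L$ yields a convergent geometric series bounded by $C\|u\|_{v,\infty}$. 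For $l=L$ both $\Lambda_{L,L}$ and $\Lambda_{L,L-1}$ are bounded by $C\|u\|_{v,\infty}$, conditional expectation being an $L^\infty$-contraction. For $l>L$, Lemma~\ref{l:Lambda} gives $|\Lambda_{l,L}|\le C\|u\|_{v,\infty}2^{\alpha_L-\alpha_{l-1}}$ and $|\Lambda_{l,L-1}|\le C\|u\|_{v,\infty}2^{\alpha_{L-1}-\alpha_{l-1}}$, and summing in $l$ again gives $C\|u\|_{v,\infty}$. Consequently $\cE(X_L^2\,|\,\F_{L-1})\le C\|u\|_{v,\infty}^2$, so the conditional square function $V_L^2=\sum_{k=1}^L\cE(X_k^2\,|\,\F_{k-1})$ grows at most linearly: $V_L^2\le CL\|u\|_{v,\infty}^2$.

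With bounded increments and $V_L^2\le CL\|u\|_{v,\infty}^2$ we have $|X_L|=O(1)=o\bigl((V_L^2/\log\log V_L^2)^{1/2}\bigr)$, so the martingale law of the iterated logarithm (see, e.g., \cite{BM}) applies: on the event $\{V_\infty^2=\infty\}$ one gets $\limsup_L\Gamma_L/(2V_L^2\log\log V_L^2)^{1/2}\le1$ a.s., while on $\{V_\infty^2<\infty\}$ the martingale converges a.s. and the limit above is $0$. Using $V_L^2\le CL\|u\|_{v,\infty}^2$ and $\log\log V_L^2\le C\log\log L$ for large $L$, this gives $\limsup_L\Gamma_L(x)/\sqrt{L\log\log L}\le C\|u\|_{v,\infty}$ a.e. Combining with Lemma~\ref{l:MA}, then with (\ref{eq:Ia}) to pass from the scales $2^{-\alpha_L}$ to all $s\to0^+$, and rewriting $\sqrt{L\log\log L}\asymp\sqrt{\log v(s)\log\log\log v(s)}$, yields (\ref{eq:osc}).

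I expect the main obstacle to be the uniform bound $\|X_L\|_\infty\le C\|u\|_{v,\infty}$. This is precisely where the block decomposition and the compact support of the wavelets are decisive: the high-frequency tail $\sum_{l>L}$ is controlled by Lemma~\ref{l:Lambda}, whose proof uses the wavelets' compact support, and the low-frequency part $\sum_{l<L}$ is controlled by Bernstein's inequality for $G_l\in V_{\alpha_l}(\infty)$ together with the geometric separation of the levels $\alpha_l$. Once the increments are bounded and the square function is linear in $L$, matching $\sqrt{L\log\log L}$ with $\sqrt{\log v(s)\log\log\log v(s)}$ and the passage from $s=2^{-\alpha_L}$ to general $s$ are routine.
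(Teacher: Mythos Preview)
Your proposal is correct and follows essentially the same route as the paper's own proof: bound the martingale differences $\Gamma_L-\Gamma_{L-1}$ uniformly by splitting over $l$, using Bernstein's inequality (\ref{eq:Bern}) on the low-frequency blocks and Lemma~\ref{l:Lambda} on the high-frequency tail, deduce that the conditional square function grows at most linearly in $L$, apply the martingale law of the iterated logarithm, and then transfer to $I_u$ via (\ref{eq:Ia}) and Lemma~\ref{l:MA}. The only cosmetic differences are your indexing ($\Gamma_L-\Gamma_{L-1}$ versus the paper's $\Gamma_{L+1}-\Gamma_L$) and your three-range split $l<L$, $l=L$, $l>L$, which the paper handles as two ranges plus the single term $|\Lambda_{L+1,L+1}|$.
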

\begin{proof}
We consider the martingale $\{\Gamma_L\}_{L=0}^\infty$ and estimate its square function 
\[S^2_N(x)=\sum_{L=0}^N\cE(|\Gamma_{L+1}-\Gamma_L|^2|\F_L).\] 
We have
\[
|\Gamma_{L+1}(x)-\Gamma_{L}(x)|\le \sum_{l=0}^L|\Lambda_{l,L+1}(x)-\Lambda_{l,L}(x)|+
\sum_{l=L+1}^\infty\left(|\Lambda_{l,L+1}(x)|+|\Lambda_{l,L}(x)|\right).\]
To estimate the first sum we use the Bernstein inequality as in the proof of Lemma \ref{l:MA}, for the second sum we apply the inequality of Lemma \ref{l:Lambda}. Then we get
\begin{multline*}
|\Gamma_{L+1}(x)-\Gamma_{L}(x)|\le\\ \sum_{l=0}^L\|\nabla G_l\|_{\infty}2^{-\alpha_L}+|\Lambda_{L+1,L+1}(x)|+
C\|u\|_{v,\infty}\sum_{l=L+1}^\infty 2^{\alpha_{L+1}-\alpha_l} \le C\|u\|_{v,\infty}.
\end{multline*}
Thus we have a martingale with bounded differences and also $S^2_N(x)\le C\|u\|^2_{v,\infty}N$. Now, we note that $\log v(2^{-\alpha_N})\ge CN$,  
 combine inequalities (\ref{eq:Ia}), (\ref{eq:MA}) and apply the law of the iterated logarithm for martingales, see for example \cite{Stbook}. 
\end{proof}

%%%%%%%%%%%%%%%%%%%%%%%%%%%%%%%%%%%%%%%%%%%%%%%%%%%%%%
\section{Concluding remarks and open problems}

It would be interesting to consider growth spaces with weights that depend on $x\in\R^n$ as well. It seems that wavelets should be better adjusted to local behavior of the functions than for example Fourier series and transforms. We expect that the following local version of Theorem \ref{th:osc} holds. If $E$ is a set of positive measure such that $|u(x,t)|\le Cv(t)$ when $x\in E$ then (\ref{eq:osc}) holds for almost all $x\in E$. Another related question is to generalize Theorem \ref{th:osc} to Lipschitz domains as it is done for Bloch functions and square function inequalities for harmonic functions in \cite{Ll} and \cite{R}.

The transition from the smooth wavelet representation into a martingale (that is Haar-wavelet representation) done in the last section, does not look very natural. Supposedly it can be omitted by proving the law of the iterated logarithm for wavelet series directly, see also \cite{C}. However, with the tools developed we find it easier to refer to well-known results for martingales. 

The following remark is due to J. G. Llorente and A. Nicolau.
\begin{remark}  
Let us note that $I_u(x,s)$ behaves like the following harmonic function
\[
H_u(x,s)=\int_0^1 u(x,t+s)d(1/v(t)).\]
The classical iterated logarithm theorem for harmonic functions gives an estimate for $H_u$ in terms of its area function. It is standard that
\[
|\nabla u(x,s)|\le C v(s)s^{-1}\] and a straightforward estimate gives
\[
|\nabla H_u(x,s)|\le  \int_0^1v(t+s)(t+s)^{-1}d(1/v(t))\lesssim s^{-1}.
\]
Let $\Gamma(x,h)$ be the doubly truncated cone, 
\[\Gamma(x,t)=\{(y,s)\in\R^{n+1}_+, |x-y|<s,\ t<s<1\},\] the square function $A(H)$ of a function $H$ is defined by
\[
A(H)(x,t)=\left(\int_{\Gamma(x,h)}|\nabla H(y,s)|^2 s^{1-n}dyds\right)^{1/2}.\]
 Then for a harmonic function $H$ one has (see \cite[Theorem 3.0.4]{BM})
\[
\limsup_{t\rightarrow 0}\frac{|H(x,t)|}{A(H)(x,t/2)\sqrt{\log\log(A(H)(x,t/2))}}\le C,\]
for almost every $x\in\{x\in \R^n: \lim_{h\rightarrow 0} A(H)(x,h)=\infty\}$. (Almost everywhere on the complement of this set $H$ has non-tangential limits.)
For the function $H_u$ we have $A(H_u)(x,t)\le C\sqrt{|\log t|}.$
This implies 
\[
\limsup_{s\rightarrow 0+}\frac{H_u(x,s)}{\sqrt{\log s^{-1}\log\log\log s^{-1}}}\le C,\]
for almost every $x\in\R^n$. For slowly growing $v$ we suggest a better estimate, we cannot derive it from the estimates of the area function of $H$. We know that this area function does not satisfy $A(H_u)(x,t)=O(\log v(t))$, but it seems like we should have $\lim_{t\rightarrow 0} A(H_u)(x,t)(\log v(t))^{-1}<+\infty$ a.e. 
\end{remark}

 If we consider $H_u(x,s)$ defined above, then it can be rewritten as a multiplier on the Fourier transform of the boundary values of $u$. This operator is similar to multipliers considered by Shields and Williams, we refer in particular to Theorem 5 in \cite{SW}. Our multiplier is quite interesting since its growth matches exactly the growth of the weight. Then the growth of the resulting function is difficult to catch precisely in the scale of growth spaces, instead we obtain the law of the iterated logarithm that gives accurate asymptotic estimate almost everywhere. It would be interesting to place the result on the oscillation integral in a more general context of multipliers. 

\section*{Acknowledgments}
The authors are grateful to Yurii Lyubarskii and Artur Nicolau for fruitful discussions of the problem.  It is our pleasure to thank the referee who  read the manuscript carefully and made useful suggestions.

Part of the work was done while P. Mozolyako was visiting the Department of Mathematical Sciences at NTNU and  E. Malinnikova was visiting the Chebyshev Laboratory at St.~Petersburg State University, we would like to thank both  Universities for hospitality and great working conditions.     

This work was supported by the 
Research Council of Norway, grants 185359/V30 and 213638, by the Chebyshev Laboratory (Department of Mathematics and Mechanics, St.~Petersburg State University) under RF government grant 11.G34.31.0026, and by RFBR grant 12-01-31492.

\end{document}